  \theoremstyle{definition}
  \newtheorem{definition}             {Definition}
  \newtheorem{remark}     [definition]{Remark}
  \theoremstyle{plain}
  \newtheorem{lemma}      [definition]{Lemma}
  \newtheorem{proposition}[definition]{Proposition}
  \newtheorem{theorem}    [definition]{Theorem}
  \newtheorem{corollary}  [definition]{Corollary}
  \newtheorem{conjecture}             {Conjecture}
  \newcommand{\opleft}{\{}
  \newcommand{\opright}{\}}
\begin{document}

\title[Polynomial identities for ternary intermolecular recombination]
{Polynomial identities for ternary intermolecular recombination}

\author{Murray R. Bremner}

\address{Department of Mathematics and Statistics, University of Saskatchewan,
Canada}

\email{bremner@math.usask.ca}

\begin{abstract}
The operation of binary intermolecular recombination, originating in the theory
of DNA computing, permits a natural generalization to $n$-ary operations which
perform simultaneous recombination of $n$ molecules. In the case $n = 3$, we
use computer algebra to determine the polynomial identities of degree $\le 9$
satisfied by this trilinear nonassociative operation. Our approach requires
computing a basis for the nullspace of a large integer matrix, and for this we
compare two methods: ($i$) the row canonical form, and ($ii$) the Hermite
normal form with lattice basis reduction. In the conclusion, we formulate some
conjectures for the general case of $n$-ary intermolecular recombination.
\end{abstract}

\maketitle

\section{Introduction}

Let $X$ be a finite set, and let $X^\ast$ be the free semigroup on $X$; thus
$X^\ast$ consists of all finite nonempty strings of elements of $X$,
  \[
  X^\ast
  =
  \{ \, x_1 x_2 \cdots x_k \,|\, x_1, x_2, \hdots, x_k \in X \, (k \ge 1) \, \},
  \]
together with the binary operation of concatenation,
  \[
  ( x_1 x_2 \cdots x_k )( x_{k+1} x_{k+2} \cdots x_{k+\ell} )
  \longmapsto
  x_1 x_2 \cdots x_{k+\ell}.
  \]
Let $n$ be a positive integer, and let $( X^\ast )^n$ be the set of all
$n$-tuples of elements of $X^\ast$. Let $R(X,n)$ be the vector space with basis
$( X^\ast )^n$ over the field $\mathbb{Q}$ of rational numbers.

\begin{definition} \label{definitionrecombination}
On $R(X,n)$ we introduce a multilinear nonassociative operation
$\opleft a_1, a_2, \hdots, a_n \opright$, called
\textbf{$n$-ary intermolecular recombination},
and defined on basis elements as follows: For
$a_i = ( a_{i1}, a_{i2}, \hdots, a_{in} ) \in ( X^\ast )^n$ ($i = 1, \hdots, n$)
we set
  \begin{equation}\label{operation}
  \opleft a_1, a_2, \hdots, a_n \opright
  =
  \sum_{\sigma \in S_n}
  ( a_{\sigma(1),1}, a_{\sigma(2),2}, \hdots, a_{\sigma(n),n} ),
  \end{equation}
where $S_n$ is the symmetric group on $\{1,2,\hdots,n \}$.
\end{definition}

In Definition \ref{definitionrecombination}, each $a_i$ can be
regarded as the abstract representation of a molecule divided into $n$
consecutive submolecules $a_{i1}$, $a_{i2}$, $\hdots$, $a_{in}$. For example,
if $X = \{ A, C, G, T \}$ represents the four bases found in DNA, then each
$a_i$ represents a DNA sequence partitioned into $n$ subsequences. With this
interpretation, the sum defining $\opleft a_1,a_2,\hdots,a_n \opright$
expresses the results of recombining the $n$ molecules in all possible ways
which preserve the positions of the $n$ submolecules in each molecule.
Formula \eqref{operation} is very similar to the definition of the permanent of an
$n \times n$ matrix; see Wanless \cite{Wanless}.

The next result  is clear.

\begin{lemma}
The operation of $n$-ary intermolecular recombination in Definition
\ref{definitionrecombination}
satisfies the \textbf{complete symmetry identity} in degree $n$:
  \begin{equation} \label{completesymmetry}
  \opleft a_{\pi(1)}, a_{\pi(2)}, \hdots, a_{\pi(n)} \opright
  =
  \opleft a_1, a_2, \hdots, a_n \opright,
  \end{equation}
for every $\pi \in S_n$; thus $\opleft a_1, a_2, \hdots, a_n \opright$ depends
only on the multiset of arguments, not the order of the arguments.
\end{lemma}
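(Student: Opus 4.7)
The plan is to verify the identity by a direct change of summation variable in the defining formula \eqref{operation}. Since the operation is multilinear, it suffices to check \eqref{completesymmetry} on basis $n$-tuples $a_i = (a_{i1}, \dots, a_{in}) \in (X^\ast)^n$.

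First I would expand the left-hand side $\opleft a_{\pi(1)}, a_{\pi(2)}, \dots, a_{\pi(n)} \opright$ using Definition \ref{definitionrecombination}. Because the $i$th argument is now $a_{\pi(i)} = (a_{\pi(i),1}, \dots, a_{\pi(i),n})$, the defining sum becomes
\[
\opleft a_{\pi(1)}, \dots, a_{\pi(n)} \opright
=
\sum_{\sigma \in S_n}
( a_{\pi(\sigma(1)),1}, a_{\pi(\sigma(2)),2}, \dots, a_{\pi(\sigma(n)),n} ).
\]

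Next I would perform the substitution $\tau = \pi \circ \sigma$. Since left multiplication by the fixed permutation $\pi$ is a bijection of $S_n$, as $\sigma$ ranges over $S_n$ so does $\tau$, and the summand becomes $(a_{\tau(1),1}, a_{\tau(2),2}, \dots, a_{\tau(n),n})$. Re-indexing gives exactly the right-hand side $\opleft a_1, a_2, \dots, a_n \opright$, completing the verification.

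There is no real obstacle here; the entire content is the observation that $S_n$ acts freely on itself by left translation, which makes the defining sum manifestly invariant under permutation of the $n$ arguments. The author's remark that ``the next result is clear'' reflects exactly this point.
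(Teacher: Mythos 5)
Your proof is correct: the substitution $\tau = \pi \circ \sigma$ and the bijectivity of left translation on $S_n$ is exactly the re-indexing argument that the paper leaves implicit when it states ``the next result is clear.'' Nothing is missing, and multilinearity does reduce the claim to basis elements as you note.
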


In the case $n = 2$, the operation of binary intermolecular recombination was
introduced by Landweber and Kari \cite{LandweberKari}.

Bremner \cite{Bremner} used computer algebra to establish the following proposition.

\begin{proposition}
Every polynomial identity of degree $\le 5$ satisfied by binary intermolecular
recombination is a consequence of commutativity (binary complete symmetry) and
the \textbf{binary recombination identity}
  \begin{equation}
  \label{recombination}
  \begin{array}{l}
  2 \opleft  \opleft  \opleft  a, b  \opright, c  \opright, d  \opright
  - \opleft  \opleft  \opleft  a, b  \opright, d  \opright, c  \opright
  - \opleft  \opleft  \opleft  a, c  \opright, b  \opright, d  \opright
  - \opleft  \opleft  \opleft  b, c  \opright, a  \opright, d  \opright
  \\[4pt]
  {}
  + \opleft  \opleft  a, b  \opright, \opleft  c, d  \opright  \opright
  = 0.
  \end{array}
  \end{equation}
\end{proposition}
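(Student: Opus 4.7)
The plan is a degree-by-degree computational verification for $d = 1, 2, 3, 4, 5$. For each $d$, let $M(d)$ be the space of multilinear monomials of degree $d$ in a free binary algebra subject only to commutativity \eqref{completesymmetry}; a basis is given by rooted binary trees with $d$ labelled leaves whose internal nodes have unordered children, so $\dim M(d) = (2d-3)!! = 1, 1, 3, 15, 105$ for $d = 1, \ldots, 5$. Let $C(d) \subseteq M(d)$ be the subspace generated by all consequences of the binary recombination identity \eqref{recombination} in degree $d$, and let $I(d) \subseteq M(d)$ be the subspace of all multilinear identities of degree $d$ satisfied by the operation on $R(X, 2)$. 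Since commutativity is already imposed and \eqref{recombination} is valid, we have $C(d) \subseteq I(d)$, and the proposition reduces to the dimension equality $\dim C(d) = \dim I(d)$.

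For the $C(d)$ side, enumerate an explicit generating set: for $d < 4$ there are no consequences and $C(d) = 0$; for $d = 4$ apply every $S_4$-permutation of variables to \eqref{recombination}; for $d = 5$ use all liftings of \eqref{recombination} into degree $5$, obtained either by multiplying the entire identity on the outside by a fifth variable, or by substituting a product $\opleft a_i, a_j \opright$ for one of the four original variables, under all variable relabellings. Reduce modulo commutativity and row-reduce the coefficient matrix over $\mathbb{Q}$ to obtain $\dim C(d)$. For the $I(d)$ side, fix $d$ argument tuples $a_1, \ldots, a_d \in R(X, 2)$ with $X$ large enough and the $a_i$ chosen generically (e.g.\ using distinct letters in each slot), expand each basis monomial of $M(d)$ using \eqref{operation}, assemble the matrix $E_d$ whose columns are indexed by a basis of $M(d)$ and whose rows record the coefficients in the standard basis of $(X^\ast)^2$, and compute $\dim I(d) = \dim M(d) - \mathrm{rank}(E_d)$.

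The main obstacle is the size and arithmetic cost of the degree-$5$ calculation. Each basis element of $M(5)$ expands to as many as $2^4 = 16$ summands in $(X^\ast)^2$ under \eqref{operation}, so $E_5$ has many thousands of rows, and the generating set for $C(5)$ is large and highly redundant. A further subtle point is that $I(d)$ can be \emph{overestimated} if $X$ is too small, since the codomain of evaluation shrinks and spurious kernel vectors appear; in practice one chooses $|X|$ sufficiently large, and argues by genericity, so that $\mathrm{rank}(E_d)$ has stabilized. Once $\dim C(d) = \dim I(d)$ is verified for every $d \le 5$, the inclusion $C(d) \subseteq I(d)$ becomes equality, completing the proof.
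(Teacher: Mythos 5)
Your plan — expand each commutative multilinear monomial on generic basis arguments, realize the space of identities as the nullspace of the resulting expansion matrix, and compare its dimension with that of the span of all permuted liftings of \eqref{recombination} — is exactly the computational method the paper itself uses (carried out explicitly for degree $4$ in Section 2, with the degree-$5$ case done the same way in the cited reference). The only slip is your size estimate: choosing the $a_i$ with all component letters distinct, each expansion is supported on the ordered pairs $(x_1,y_2)$ of distinct variables, so $E_5$ has just $5\cdot 4=20$ rows and $105$ columns rather than thousands of rows.
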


Sverchkov \cite{Sverchkov} has recently proved the following theorem.

\begin{theorem}
Every identity, with no restriction on the degree, satisfied by binary intermolecular
recombination is a consequence of commutativity and the binary recombination identity.
\end{theorem}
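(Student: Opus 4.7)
The plan is to prove Sverchkov's theorem by constructing a normal form for the free commutative nonassociative $\mathbb{Q}$-algebra defined by the binary recombination identity and matching its multilinear dimensions against those realized in $R(X, 2)$.

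Let $F$ denote that free algebra on countably many generators $a_1, a_2, \ldots$, and let $F_n \subseteq F$ be its multilinear component in $a_1, \ldots, a_n$. Choose $X$ large enough to contain $2n$ distinct symbols and let $\phi \colon F \to R(X, 2)$ send $a_i$ to the generic pair $(x_i, y_i)$. Because each nonassociative monomial evaluates on these pairs to a $\mathbb{Z}$-linear combination of basis pairs $(x_i, y_j)$ with $i \ne j$ (each leaf $a_k$ contributes $x_k$ or $y_k$, never both, in any single summand), the theorem is equivalent to injectivity of $\phi$ on each $F_n$. I would next read \eqref{recombination} as a rewriting rule whose leading monomial is the unique non-left-normed top-level product $\opleft \opleft a, b \opright, \opleft c, d \opright \opright$; together with commutativity, repeated application reduces every element of $F$ to a $\mathbb{Q}$-linear combination of left-normed monomials $\opleft \cdots \opleft \opleft a_{i_1}, a_{i_2} \opright, a_{i_3} \opright \cdots, a_{i_n} \opright$. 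A diamond-lemma check that all critical overlaps, where the rule can be applied to two different subterms simultaneously, resolve identically establishes confluence. Subsequent applications of the identity to internal subtrees produce further relations among left-normed monomials; organizing these by degree extracts an explicit spanning set $B_n \subseteq F_n$ of irreducible normal forms and furnishes the upper bound $\dim F_n \le |B_n|$.

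For the matching lower bound I would show that $\phi(B_n)$ is linearly independent in $R(X, 2)$. Each normal-form monomial has a distinguished outermost operation $\opleft \cdot, a_k \opright$, which constrains every pair $(x_i, y_j)$ appearing in its evaluation to satisfy $i = k$ or $j = k$. Stratifying $B_n$ by this label $k$ and inducting on degree decouples the coefficient vectors into contributions living in the $2(n-1)$-dimensional space of pairs carrying a shared index $k$, and reduces the claim to a lower-degree independence statement together with a rank calculation on the resulting coefficient matrix.

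The main obstacle is this last step. Unlike the degree-$\le 5$ case, in which the space of identities can be exhausted by finite-dimensional linear algebra as in the preceding proposition, the argument here must control coefficients uniformly in $n$ by producing an explicit basis of $\phi(F_n)$ of the correct size and verifying that its coefficient system has full rank in every degree. This combinatorial content is the essential novelty of the proof; the diamond-lemma confluence check, while laborious, is routine by comparison.
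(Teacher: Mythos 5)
First, note that the paper does not prove this theorem: it is quoted from Sverchkov \cite{Sverchkov}, so there is no in-paper argument to compare yours against. Your reduction of the statement to injectivity of the generic evaluation $\phi$ on each multilinear component $F_n$ is sound (characteristic zero plus multilinearity of the operation make the multilinear generic substitution decisive), and a normal-form/rewriting strategy is the natural shape for such a proof. But as written the proposal has a genuine gap exactly where you locate ``the essential novelty'': everything that would constitute the proof is deferred. The single orientable rule extracted from \eqref{recombination} --- eliminating the factorization $\opleft\opleft a,b\opright,\opleft c,d\opright\opright$ --- only reduces $F_n$ to the span of the $n!/2$ left-normed multilinear monomials, whereas the generic evaluation lands in the span of the $n(n-1)$ pairs $(x_i,y_j)$ with $i\ne j$. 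So for $n\ge 4$ the overwhelming majority of the required relations are relations \emph{among left-normed monomials}, and you give no mechanism for deriving them from \eqref{recombination}: no second family of rewriting rules, no explicit irreducible set $B_n$, and no count showing $|B_n|\le n(n-1)$. Without that, the upper bound $\dim F_n\le |B_n|$ is vacuous and the whole dimension-matching scheme cannot start.

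Two further points. The ``routine'' diamond-lemma check is not routine here: commutativity is not orientable as a reduction, so one must either fix a commutative normal form at every node or rewrite modulo an equational theory, and the critical pairs coming from the recombination rule applied at nested positions are precisely where the as-yet-unspecified relations among left-normed monomials would have to emerge --- that analysis is the theorem, not a verification. The lower bound is also only gestured at: stratifying by the outermost factor $a_k$ does not obviously decouple the coefficient vectors, since distinct normal forms with the same label $k$ still expand into overlapping sets of pairs $(x_i,y_j)$ with neither index equal to $k$ absent. (A small inaccuracy along the way: in a single summand of an expansion, all but two leaves contribute nothing, rather than each leaf contributing one of $x_k$ or $y_k$.) In short, the strategy is reasonable and close in spirit to Sverchkov's actual construction of a basis of the relatively free algebra, but the proposal is an outline of where a proof would live, not a proof.
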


In related work, Bremner, Piao and Richards \cite{BremnerPiaoRichards} have shown
that every polynomial identity satisfied by the zygotic algebras of simple
Mendelian inheritance is a consequence of commutativity and the binary
recombination identity.

In the present paper we consider the case $n = 3$; we use computer algebra to
determine the polynomial identities of degree $\le 9$ satisfied by ternary
intermolecular recombination. A monomial which involves $d$ applications of an
$n$-ary operation has degree $d(n{-}1) + 1$; thus polynomial identities for
ternary intermolecular recombination exist only in odd degrees.

\section{Binary intermolecular recombination}

In this section we recall the results of Bremner \cite{Bremner} on polynomial
identities for binary intermolecular recombination, and show how these results
may also be obtained (and slightly improved) using the Hermite normal form with
lattice basis reduction.

In degree 4, there are 15 commutative nonassociative multilinear monomials, 12
with association type $\opleft \opleft \opleft a, b \opright, c \opright, d
\opright$ and 3 with association type $\opleft \opleft a, b \opright, \opleft
c, d \opright \opright$. We order these monomials first by association type,
and then lexicographically by the permutation of the variables:
  \begin{alignat*}{4}
  &\opleft \opleft \opleft a, b \opright, c \opright, d \opright, &\quad
  &\opleft \opleft \opleft a, b \opright, d \opright, c \opright, &\quad
  &\opleft \opleft \opleft a, c \opright, b \opright, d \opright, &\quad
  &\opleft \opleft \opleft a, c \opright, d \opright, b \opright,
  \\
  &\opleft \opleft \opleft a, d \opright, b \opright, c \opright, &\quad
  &\opleft \opleft \opleft a, d \opright, c \opright, b \opright, &\quad
  &\opleft \opleft \opleft b, c \opright, a \opright, d \opright, &\quad
  &\opleft \opleft \opleft b, c \opright, d \opright, a \opright,
  \\
  &\opleft \opleft \opleft b, d \opright, a \opright, c \opright, &\quad
  &\opleft \opleft \opleft b, d \opright, c \opright, a \opright, &\quad
  &\opleft \opleft \opleft c, d \opright, a \opright, b \opright, &\quad
  &\opleft \opleft \opleft c, d \opright, b \opright, a \opright,
  \\
  &\opleft \opleft a, b \opright, \opleft c, d \opright \opright, &\quad
  &\opleft \opleft a, c \opright, \opleft b, d \opright \opright, &\quad
  &\opleft \opleft a, d \opright, \opleft b, c \opright \opright.
  \end{alignat*}
We expand each monomial by three applications of binary intermolecular
recombination. For example, writing
  \[
  a = ( a_1, a_2 ),
  \quad
  b = ( b_1, b_2 ),
  \quad
  c = ( c_1, c_2 ),
  \quad
  d = ( d_1, d_2 ),
  \]
we obtain
  \begin{align*}
  \opleft \opleft \opleft a, b \opright, c \opright, d \opright
  &=
  ( a_1, d_2 ) + ( b_1, d_2 ) + 2 ( c_1, d_2 ) + ( d_1, a_2 ) + ( d_1, b_2 ) + 2 (d_1, c_2 ),
  \\
  \opleft \opleft a, b \opright, \opleft c, d \opright \opright
  &=
  ( a_1, c_2 ) + ( a_1, d_2 ) + ( b_1, c_2 ) + ( b_1, d_2 ) + ( c_1, a_2 ) + ( c_1, b_2 )
  \\
  &\quad
  + ( d_1, a_2 ) + ( d_1, b_2 ).
  \end{align*}
Each expansion is a linear combination (allowing zero coefficients) of 12
pairs, which we order lexicographically:
  \begin{alignat*}{6}
  &( a_1, b_2 ), &\quad
  &( a_1, c_2 ), &\quad
  &( a_1, d_2 ), &\quad
  &( b_1, a_2 ), &\quad
  &( b_1, c_2 ), &\quad
  &( b_1, d_2 ),
  \\
  &( c_1, a_2 ), &\quad
  &( c_1, b_2 ), &\quad
  &( c_1, d_2 ), &\quad
  &( d_1, a_2 ), &\quad
  &( d_1, b_2 ), &\quad
  &( d_1, c_2 ).
  \end{alignat*}
We store the expansions in the $12 \times 15$ matrix $E$ in which entry $(i,j)$
contains the coefficient of pair $i$ in the expansion of monomial $j$; see
Table \ref{binaryexpansionmatrix}. The coefficient vectors of the polynomial
identities satisfied by binary intermolecular recombination in degree 4 are the
nontrivial linear combinations of the columns of $E$ that give the zero vector;
that is, the nonzero vectors in the nullspace of $E$.

  \begin{table}
  \begin{center}
  \[
  \left[
  \begin{array}{rrrrrrrrrrrrrrr}
  0 & 0 & 0 & 1 & 0 & 1 & 0 & 1 & 0 & 1 & 2 & 2 & 0 & 1 & 1 \\
  0 & 1 & 0 & 0 & 1 & 0 & 0 & 1 & 2 & 2 & 0 & 1 & 1 & 0 & 1 \\
  1 & 0 & 1 & 0 & 0 & 0 & 2 & 2 & 0 & 1 & 0 & 1 & 1 & 1 & 0 \\
  0 & 0 & 0 & 1 & 0 & 1 & 0 & 1 & 0 & 1 & 2 & 2 & 0 & 1 & 1 \\
  0 & 1 & 0 & 1 & 2 & 2 & 0 & 0 & 1 & 0 & 1 & 0 & 1 & 1 & 0 \\
  1 & 0 & 2 & 2 & 0 & 1 & 1 & 0 & 0 & 0 & 1 & 0 & 1 & 0 & 1 \\
  0 & 1 & 0 & 0 & 1 & 0 & 0 & 1 & 2 & 2 & 0 & 1 & 1 & 0 & 1 \\
  0 & 1 & 0 & 1 & 2 & 2 & 0 & 0 & 1 & 0 & 1 & 0 & 1 & 1 & 0 \\
  2 & 2 & 1 & 0 & 1 & 0 & 1 & 0 & 1 & 0 & 0 & 0 & 0 & 1 & 1 \\
  1 & 0 & 1 & 0 & 0 & 0 & 2 & 2 & 0 & 1 & 0 & 1 & 1 & 1 & 0 \\
  1 & 0 & 2 & 2 & 0 & 1 & 1 & 0 & 0 & 0 & 1 & 0 & 1 & 0 & 1 \\
  2 & 2 & 1 & 0 & 1 & 0 & 1 & 0 & 1 & 0 & 0 & 0 & 0 & 1 & 1
  \end{array}
  \right]
  \]
  \end{center}
  \caption{The expansion matrix $E$ in degree 4 for $n = 2$}
  \label{binaryexpansionmatrix}
  \end{table}

There are two ways to compute a basis for the nullspace of a matrix $E$ with
entries in the ring $\mathbb{Z}$ of integers:
  \begin{enumerate}
  \item[(a)]
Compute the row canonical form (RCF) of $E$ over $\mathbb{Q}$, set the free
variables equal to the standard basis vectors in $\mathbb{Q}^d$ where $d$ is the
dimension of the nullspace, and solve for the leading variables; then (if necessary)
multiply each vector by the LCM of the denominators of its components to obtain
integer components and divide each new vector by the GCD of its (now integral)
components.
  \item[(b)]
Compute the Hermite normal form $H$ of the transpose $E^t$ together with a
transform matrix $U$ for which $U E^t = H$. The bottom $d$ rows of $U$ form a
basis for the integer nullspace lattice of $E$. Then use the LLL algorithm for
lattice basis reduction on the bottom $d$ rows of $U$. (This method was first
applied to polynomial identities by Bremner and Peresi \cite{BremnerPeresi}.)
  \end{enumerate}

  \begin{table}
  \begin{center}
  \[
  \left[
  \begin{array}{rrrrrrrrrrrrrrr}
  1 & 0 & 0 & 0 & 0 &  1 & 0 & -3 & -3 & -5 &  0 & -3 & -2 &  1 & -2 \\
  0 & 1 & 0 & 0 & 0 & -1 & 0 &  3 &  3 &  5 &  1 &  4 &  1 &  0 &  3 \\
  0 & 0 & 1 & 0 & 0 & -1 & 0 & -1 &  1 &  0 & -2 & -2 &  1 & -2 &  0 \\
  0 & 0 & 0 & 1 & 0 &  1 & 0 &  1 &  0 &  1 &  2 &  2 &  0 &  1 &  1 \\
  0 & 0 & 0 & 0 & 1 &  1 & 0 & -2 & -1 & -3 & -1 & -3 &  0 &  0 & -2 \\
  0 & 0 & 0 & 0 & 0 &  0 & 1 &  3 &  1 &  3 &  1 &  3 &  1 &  1 &  1
  \end{array}
  \right]
  \]
  \end{center}
  \caption{The row canonical form of the matrix $E$ for $n = 2$}
  \label{binaryrcf}
  \begin{center}
  \[
  \left[
  \begin{array}{rrrrrrrrrrrrrrr}
  -1 &  1 &  1 & -1 & -1 & 1 &  0 & 0 & 0 & 0 & 0 & 0 & 0 & 0 & 0 \\
  -1 &  0 &  2 & -1 &  0 & 0 & -1 & 0 & 0 & 0 & 0 & 0 & 0 & 1 & 0 \\
   2 & -1 & -1 &  0 &  0 & 0 & -1 & 0 & 0 & 0 & 0 & 0 & 1 & 0 & 0 \\
   0 & -1 &  2 & -2 &  1 & 0 & -1 & 0 & 0 & 0 & 1 & 0 & 0 & 0 & 0 \\
   2 & -3 &  0 & -1 &  2 & 0 & -1 & 0 & 0 & 0 & 0 & 0 & 0 & 0 & 1 \\
   3 & -3 & -1 &  0 &  1 & 0 & -1 & 0 & 1 & 0 & 0 & 0 & 0 & 0 & 0 \\
   3 & -3 &  1 & -1 &  2 & 0 & -3 & 1 & 0 & 0 & 0 & 0 & 0 & 0 & 0 \\
   3 & -4 &  2 & -2 &  3 & 0 & -3 & 0 & 0 & 0 & 0 & 1 & 0 & 0 & 0 \\
   5 & -5 &  0 & -1 &  3 & 0 & -3 & 0 & 0 & 1 & 0 & 0 & 0 & 0 & 0
  \end{array}
  \right]
  \]
  \end{center}
  \caption{The canonical basis of the nullspace of $E$ for $n = 2$}
  \label{binarybasis}
  \end{table}

For the matrix $E$ of Table \ref{binaryexpansionmatrix}, method (a) gives the
following results. The RCF over $\mathbb{Q}$ is given in Table \ref{binaryrcf}, and
the corresponding nullspace basis vectors (sorted by increasing length) are given in
Table \ref{binarybasis}. The squared Euclidean norms of these basis vectors are 6, 8, 8,
12, 20, 22, 34, 52, 70.

For each basis vector, we apply all permutations of $a, b, c, d$ to
the corresponding identity $I$ and store the results in a $24 \times 15$ matrix
$M(I)$; the rank of $M(I)$ is the dimension of the $S_4$-module of identities
which are consequences of $I$. Rows 2 and 3 are the shortest vectors (squared
norm 8) for which the corresponding identities generate the entire nullspace,
and row 3 corresponds to the binary recombination identity.

Method (a) provides a basis of integral vectors for the rational nullspace of
the matrix $E$. The disadvantage of this method is that this basis of integral vectors
may not be a integer basis for the nullspace lattice of $E$; see Example 17 in
Bremner and Peresi \cite{BremnerPeresi}.

\begin{definition}
The \textbf{nullspace lattice} of the $m \times n$ integer matrix $E$ is
  \[
  L(E) = \{ \, X \in \mathbb{Z}^n \mid EX = O \, \}.
  \]
\end{definition}

We want to find a lattice basis of $L(E)$; that is, a set of vectors in $L(E)$ which
are linearly independent over $\mathbb{Q}$ and such that every vector in $L(E)$ is
an integer linear combination of these basis vectors.  To find a lattice basis, we need
to recall the definition of the Hermite normal form (HNF) of an integer matrix.

\begin{definition}
The $m \times n$ matrix $H$ over $\mathbb{Z}$ is in \textbf{Hermite normal form}
(HNF) if there exists an integer $r$ (the rank of $H$) with $0 \le r \le m$ and a
sequence of integers $1 \le j_1 < j_2 < \cdots < j_r \le n$ such that
  \begin{enumerate}
  \item
  $H_{ij} = 0$ for $1 \le i \le r$ and $1 \le j < j_i$,
  \item
  $H_{i,j_i} \ge 1$ for $1 \le i \le r$,
  \item
  $0 \le H_{k,j_i} < H_{i,j_i}$ for $1 \le i \le r$ and $1 \le k < i$,
  \item
  $H_{ij} = 0$ for $r+1 \le i \le m$ and $1 \le j \le n$.
  \end{enumerate}
\end{definition}

The following two results are Lemmas 19 and 20 in Bremner and Peresi
\cite{BremnerPeresi}.

\begin{lemma}
If $E$ is an $m \times n$ matrix over $\mathbb{Z}$, then there is a unique $m \times n$
matrix $H$ over $\mathbb{Z}$ in HNF such that $UE = H$ for some $m \times m$ matrix
$U$ over $\mathbb{Z}$ with $\det(U) = \pm 1$. (The matrix $U$ is not unique.)
\end{lemma}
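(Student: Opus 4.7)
The statement has two parts, \emph{existence} of the HNF $H$ (with transform $U$) and \emph{uniqueness} of $H$, and I will handle them separately; uniqueness is where I expect the main difficulty.

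For existence I will give a constructive algorithm. The three elementary integer row operations --- (i) swap two rows, (ii) negate a row, and (iii) add an integer multiple of one row to another --- preserve $\mathbb{Z}$-integrality and each corresponds to left multiplication by a unimodular matrix (determinant $\pm 1$). Starting from $E$ I will produce $H$ by a sequence of such operations and take $U$ to be the product of the corresponding elementary matrices, so that $\det(U) = \pm 1$ automatically. Concretely, I locate the first column $j_1$ containing a nonzero entry and apply the extended Euclidean algorithm, realised via operations (ii) and (iii) among the nonzero entries of that column, until only one nonzero entry remains, equal to the positive GCD; I swap it into row $1$ by operation (i). This yields a pivot $H_{1, j_1} \ge 1$ with zeros below and establishes conditions (1) and (2) for $i = 1$. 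Recursing on the submatrix formed by rows $2, \ldots, m$ and columns $j_1{+}1, \ldots, n$ produces the remaining pivots in columns $j_2 < \cdots < j_r$; the recursion terminates when rows $r{+}1, \ldots, m$ become zero, which is condition (4). A final upward sweep replacing each $H_{k, j_i}$ for $k < i$ by its residue modulo $H_{i, j_i}$ (again operation (iii)) secures condition (3).

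For uniqueness, suppose $U_1 E = H_1$ and $U_2 E = H_2$ are both HNFs of $E$, and set $V = U_2 U_1^{-1}$, a unimodular matrix satisfying $V H_1 = H_2$. Since $V$ is invertible over $\mathbb{Q}$, the matrices $H_1$ and $H_2$ have identical zero columns and zero rows, hence the same rank $r$ and the same pivot sequence $j_1 < \cdots < j_r$. The plan is to prove, by induction on $i = 1, \ldots, r$, that the first $r$ rows of $V$ coincide with those of the identity matrix, which forces $H_1 = H_2$. Reading column $j_1$ of $V H_1 = H_2$ and using conditions (1), (4) to observe that column $j_1$ of each $H_\alpha$ has only its top entry $H_\alpha[1, j_1] \ge 1$ nonzero, one deduces $V[i, 1] = 0$ for $i > 1$. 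Iterating this analysis column by column on $j_2, j_3, \ldots, j_r$ shows that the first $r$ columns of $V$ are upper triangular; the block form $V = \bigl(\begin{smallmatrix} A & B \\ 0 & C \end{smallmatrix}\bigr)$ with $A$ upper triangular then gives $\prod_{i=1}^{r} V[i, i] = \pm 1$, and positivity of the pivots on both sides in $V[i, i] H_1[i, j_i] = H_2[i, j_i]$ forces each $V[i, i] = +1$. Finally, for $k < i$ the equation $V[k, i] \, H_1[i, j_i] + H_1[k, j_i] = H_2[k, j_i]$, combined with condition (3) applied to both $H_1$ and $H_2$ (which confines $H_1[k, j_i]$ and $H_2[k, j_i]$ to the range $[0, H_1[i, j_i])$), forces $V[k, i] = 0$ and $H_1[k, j_i] = H_2[k, j_i]$. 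This last step --- using the modular reduction condition simultaneously on both HNFs at the same pivot column --- is the crux of the argument and the one I expect to be hardest to set up cleanly.

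The parenthetical non-uniqueness of $U$ is then immediate: whenever $m > r$, left-multiplying $U$ by any unimodular matrix that fixes the first $r$ rows of $H$ and acts arbitrarily on the last $m - r$ (zero) rows of $H$ produces another valid transform satisfying $U' E = H$.
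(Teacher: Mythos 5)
The paper gives no proof of this lemma at all: it is quoted as Lemma 19 of Bremner and Peresi \cite{BremnerPeresi}, so there is nothing internal to compare against. Your self-contained argument is the standard one for the row-style Hermite normal form (Euclidean reduction down each pivot column for existence; analysis of the unimodular transition matrix $V = U_2 U_1^{-1}$ for uniqueness), and its overall structure is sound: the column-by-column computation showing that the first $r$ columns of $V$ vanish below the diagonal, the determinant argument pinning each $V[i,i]$ to $\pm 1$, positivity of the pivots giving $V[i,i] = +1$, and the use of condition (3) on both $H_1$ and $H_2$ to force $V[k,i] = 0$ all work as you describe. They do yield $H_1 = H_2$, because the rows of $H_1$ below row $r$ are zero, so only the first $r$ columns of $V$ matter; strictly speaking it is these \emph{columns}, not the first $r$ rows of $V$, that your induction identifies with those of the identity matrix. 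Your closing remark that $U$ fails to be unique exactly because rows $r+1,\dots,m$ of $H$ vanish when $m > r$ is also correct.

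One step needs repair. You deduce ``the same pivot sequence $j_1 < \cdots < j_r$'' from the fact that $H_1$ and $H_2$ have identical zero columns and zero rows. That is not sufficient: a column of a matrix in HNF can be nonzero without being a pivot column (for instance a column carrying only entries above earlier pivots), so equality of the sets of zero columns does not determine where the pivots sit, and the whole subsequent induction presupposes that the two pivot sequences coincide. The correct justification is that $V$ and $V^{-1}$ are integer, hence rational, matrices, so $H_1$ and $H_2$ have the same row space over $\mathbb{Q}$; and for a matrix satisfying conditions (1) and (4) the pivot set is intrinsic to the row space, namely the set of positions at which some nonzero vector of the row space has its first nonzero coordinate. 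With that substitution (or by interleaving the identification of each $j_i$ with your column-by-column induction), the plan goes through.
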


\begin{lemma}
Let $E$ be an $m \times n$ matrix over $\mathbb{Z}$, let $H$ be the HNF of $E^t$ ,
and let $U$ be an $n \times n$ matrix over $\mathbb{Z}$ with $\det(U) = \pm 1$ and
$U E^t = H$. If $r$ is the rank of $H$, then the last $n-r$ rows of $U$ form a lattice
basis for $L(E)$.
\end{lemma}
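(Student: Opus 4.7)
The plan is to break the proof into three claims: (i) the last $n-r$ rows of $U$, viewed as column vectors, lie in $L(E)$; (ii) they are $\mathbb{Q}$-linearly independent; and (iii) every element of $L(E)$ is an integer linear combination of them.

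For (i), I would read $U E^t = H$ row by row: writing $u_i$ for the $i$-th row of $U$ and $h_i$ for the $i$-th row of $H$, the identity becomes $u_i E^t = h_i$. Property (4) of the HNF gives $h_i = 0$ for $r+1 \le i \le n$, so transposing yields $E u_i^t = 0$ and places $u_i^t \in L(E)$ for each such $i$. Claim (ii) is immediate from $\det(U) = \pm 1$, which makes all $n$ rows of $U$ linearly independent, so any subset inherits this property.

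The main obstacle, and the reason integrality (as opposed to mere rationality) enters, is claim (iii). My plan is to exploit the fact that $U \in GL_n(\mathbb{Z})$: since $U^{-1}$ also has integer entries, the transposed rows $u_1^t, \ldots, u_n^t$ form a $\mathbb{Z}$-basis of $\mathbb{Z}^n$. Hence any $X \in L(E) \subseteq \mathbb{Z}^n$ admits a unique expansion $X = \sum_{i=1}^n c_i u_i^t$ with $c_i \in \mathbb{Z}$. Applying $E$ gives
\[
0 \;=\; EX \;=\; \sum_{i=1}^n c_i (u_i E^t)^t \;=\; \sum_{i=1}^r c_i h_i^t,
\]
where the tail terms vanish by (i). The final step is to deduce $c_1 = \cdots = c_r = 0$, which I would derive from the $\mathbb{Q}$-linear independence of $h_1, \ldots, h_r$: HNF conditions (1) and (2) force a staircase pattern with strictly increasing pivot columns $j_1 < j_2 < \cdots < j_r$ and positive pivot entries $H_{i,j_i} \ge 1$, so the $r \times r$ submatrix indexed by these pivot columns is lower-triangular with nonzero diagonal. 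This forces $c_i = 0$ for $i \le r$, leaving $X = \sum_{i=r+1}^n c_i u_i^t$ as the required integer combination.
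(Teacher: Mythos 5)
The paper itself gives no proof of this lemma: it is quoted verbatim as Lemma 20 of Bremner and Peresi \cite{BremnerPeresi}, so there is nothing internal to compare against. Your argument is correct and complete, and it is the standard proof of this fact: membership in $L(E)$ follows from the vanishing of rows $r+1,\dots,n$ of $H$; $\mathbb{Q}$-independence follows from $\det(U)=\pm 1$; and integral spanning follows because unimodularity makes the rows of $U$ a $\mathbb{Z}$-basis of $\mathbb{Z}^n$, after which the coefficients $c_1,\dots,c_r$ are killed by the linear independence of the nonzero rows of $H$. One cosmetic slip: the $r\times r$ submatrix of $H$ on the pivot columns $j_1<\dots<j_r$ has $H_{i,j_k}=0$ for $i>k$, so it is \emph{upper} triangular rather than lower triangular; this does not affect the conclusion that it is nonsingular, so the deduction $c_1=\dots=c_r=0$ stands.
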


  \begin{table}
  \begin{center}
  \[
  \left[
  \begin{array}{rrrrrrrrrrrr}
  1 & 0 & 0 & 1 & 0 & 0 & 0 & 0 & 3 & 0 & 0 & 3 \\
  0 & 1 & 0 & 0 & 0 & 0 & 1 & 0 & 3 & 0 & 0 & 3 \\
  0 & 0 & 1 & 0 & 0 & 0 & 0 & 0 & 3 & 1 & 0 & 3 \\
  0 & 0 & 0 & 0 & 1 & 0 & 0 & 1 & 3 & 0 & 0 & 3 \\
  0 & 0 & 0 & 0 & 0 & 1 & 0 & 0 & 3 & 0 & 1 & 3 \\
  0 & 0 & 0 & 0 & 0 & 0 & 0 & 0 & 4 & 0 & 0 & 4 \\
  0 & 0 & 0 & 0 & 0 & 0 & 0 & 0 & 0 & 0 & 0 & 0 \\
  \vdots & \vdots & \vdots & \vdots & \vdots & \vdots &
  \vdots & \vdots & \vdots & \vdots & \vdots &\vdots
  \end{array}
  \right]
  \]
  \end{center}
  \caption{Hermite normal form $H$ of $E^t$ for $n = 2$}
  \label{binaryhermiteform}
  \begin{center}
  \[
  \left[
  \begin{array}{rrrrrrrrrrrrrrr}
  2 &  1 & -2 &  1 & -1 &  0 &  0 & 0 & 0 & 0 & 0 & 0 & 0 & 0 & 0 \\
  0 &  2 &  0 &  0 & -1 &  0 &  0 & 0 & 0 & 0 & 0 & 0 & 0 & 0 & 0 \\
  2 &  0 & -1 &  0 &  0 &  0 &  0 & 0 & 0 & 0 & 0 & 0 & 0 & 0 & 0 \\
  4 & -2 & -2 &  1 &  2 & -1 & -1 & 0 & 0 & 0 & 0 & 0 & 0 & 0 & 0 \\
  3 & -1 & -1 &  1 &  1 & -1 & -1 & 0 & 0 & 0 & 0 & 0 & 0 & 0 & 0 \\
  4 & -1 & -2 &  1 &  1 & -1 & -1 & 0 & 0 & 0 & 0 & 0 & 0 & 0 & 0 \\
  1 & -1 & -1 &  1 &  1 & -1 &  0 & 0 & 0 & 0 & 0 & 0 & 0 & 0 & 0 \\
  6 & -6 & -2 &  2 &  5 & -3 & -3 & 1 & 0 & 0 & 0 & 0 & 0 & 0 & 0 \\
  4 & -4 & -2 &  1 &  2 & -1 & -1 & 0 & 1 & 0 & 0 & 0 & 0 & 0 & 0 \\
  8 & -8 & -3 &  2 &  6 & -3 & -3 & 0 & 0 & 1 & 0 & 0 & 0 & 0 & 0 \\
  1 & -2 &  1 & -1 &  2 & -1 & -1 & 0 & 0 & 0 & 1 & 0 & 0 & 0 & 0 \\
  6 & -7 & -1 &  1 &  6 & -3 & -3 & 0 & 0 & 0 & 0 & 1 & 0 & 0 & 0 \\
  3 & -2 & -2 &  1 &  1 & -1 & -1 & 0 & 0 & 0 & 0 & 0 & 1 & 0 & 0 \\
  0 & -1 &  1 &  0 &  1 & -1 & -1 & 0 & 0 & 0 & 0 & 0 & 0 & 1 & 0 \\
  3 & -4 & -1 &  0 &  3 & -1 & -1 & 0 & 0 & 0 & 0 & 0 & 0 & 0 & 1
  \end{array}
  \right]
  \]
  \end{center}
  \caption{Transform matrix $U$ with $U E^t = H$ for $n = 2$}
  \label{binarytransformoriginal}
  \end{table}

For the matrix $E$ of Table \ref{binaryexpansionmatrix}, method (b) implemented with
the Maple command
  \[
  \texttt{HermiteForm( Transpose(E), output=['H','U'] ):}
  \]
gives the Hermite normal form $H$ in Table \ref{binaryhermiteform} and the transform
matrix $U$ in Table \ref{binarytransformoriginal}.  Since the rank of $E$ is 6, the last 9
rows of $U$ form a lattice basis of the integer nullspace of $E^t$.  The squared norms
of these 9 row vectors are 11, 5, 5, 31, 15, 25, 6, 124, 44, 196, 14, 142, 22, 6, 38.  At this
point the lengths of the basis vectors are greater than those obtained with the RCF;
however, we know that we have an integer basis of the nullspace lattice.

  \begin{table}
  \begin{center}
  \[
  \left[
  \begin{array}{rrrrrrrrrrrrrrr}
   1 &  1 &  0 &  0 & -1 &  0 & -1 &  0 &  0 & 0 &  0 &  0 &  0 &  1 & 0 \\
   1 &  0 &  0 &  0 &  0 &  0 &  0 &  0 &  1 & 0 &  0 &  0 & -1 &  0 & 0 \\
   0 &  1 &  0 &  0 &  0 &  0 &  1 &  0 &  0 & 0 &  0 &  0 & -1 &  0 & 0 \\
   1 &  0 &  0 &  0 &  1 &  0 &  0 &  0 &  0 & 0 &  0 &  0 & -1 &  0 & 0 \\
   0 &  1 &  1 &  0 &  0 &  0 &  0 &  0 &  0 & 0 &  0 &  0 & -1 &  0 & 0 \\
   1 &  1 &  0 &  0 &  0 &  0 &  0 &  0 &  0 & 0 &  0 &  0 & -1 &  0 & 0 \\
   1 & -1 & -1 &  1 &  1 & -1 &  0 &  0 &  0 & 0 &  0 &  0 &  0 &  0 & 0 \\
  -1 &  1 &  0 &  0 &  0 &  0 &  1 & -1 & -1 & 1 &  0 &  0 &  0 &  0 & 0 \\
   1 &  0 & -1 &  0 &  0 &  0 &  0 & -1 & -1 & 1 &  0 &  0 &  1 &  0 & 0 \\
   0 & -1 &  0 &  0 &  1 &  0 &  1 & -1 &  0 & 1 &  0 &  0 & -1 &  0 & 0 \\
   0 &  0 & -1 &  0 &  0 & -1 &  1 & -1 &  0 & 0 &  1 &  0 &  1 &  0 & 0 \\
   0 &  0 &  0 &  0 &  1 & -1 &  0 & -1 & -1 & 0 &  0 &  1 &  1 &  0 & 0 \\
   1 & -1 &  0 &  0 &  0 &  0 & -1 &  0 &  0 & 1 &  0 & -1 &  0 &  1 & 0 \\
   0 &  1 &  0 & -1 &  0 &  0 &  0 &  0 & -1 & 0 &  1 &  0 &  1 & -1 & 0 \\
   1 & -1 & -1 &  0 &  0 &  1 &  0 &  0 &  0 & 0 & -1 &  0 &  0 &  0 & 1
  \end{array}
  \right]
  \]
  \end{center}
  \caption{Reduced transform matrix $U$ with $U E^t = H$ for $n = 2$}
  \label{binarytransformreduced}
  \end{table}

To improve these results we use the LLL algorithm for lattice basis reduction, following
Bremner and Peresi \cite{BremnerPeresi}. For the matrix $E$ of Table
\ref{binaryexpansionmatrix}, the Maple command
  \[
  \texttt{HermiteForm( Transpose(E), output='U', method='integer[reduced]' ):}
  \]
produces the transform matrix $U$ of Table \ref{binarytransformreduced}. The
bottom 9 rows of $U$ are a reduced basis for the integral nullspace lattice of $E$;
every one of these vectors has squared norm 6, the same as the shortest vector
from Table \ref{binarybasis}. Even for this small matrix, the LLL algorithm has
produced a basis of the nullspace with significantly shorter vectors. The
identity corresponding to row 9 of $U$ generates the entire nullspace:
  \begin{equation}
  \label{reducedidentity}
  \begin{array}{l}
  \opleft \opleft \opleft a, b \opright, c \opright, d \opright
  - \opleft \opleft \opleft a, c \opright, b \opright, d \opright
  - \opleft \opleft \opleft b, c \opright, d \opright, a \opright
  - \opleft \opleft \opleft b, d \opright, a \opright, c \opright
  \\[4pt]
  {}
  + \opleft \opleft \opleft b, d \opright, c \opright, a \opright
  + \opleft \opleft a, b \opright, \opleft c, d \opright \opright
  =
  0.
  \end{array}
  \end{equation}
Identity \eqref{reducedidentity} has one more term than the binary
recombination identity \eqref{recombination}, but its coefficient vector is
shorter.

Identities \eqref{recombination} and \eqref{reducedidentity} both have only one
term in the second association type, so both imply that every monomial in the
second type can be expressed as a linear combination of monomials in the first
type. For example, identity \eqref{reducedidentity} can be written in the form
  \begin{align*}
  \opleft \opleft a, b \opright, \opleft c, d \opright \opright
  &=
  - \, \opleft \opleft \opleft a, b \opright, c \opright, d \opright
  + \opleft \opleft \opleft a, c \opright, b \opright, d \opright
  + \opleft \opleft \opleft b, c \opright, d \opright, a \opright
  \\
  &\quad
  + \opleft \opleft \opleft b, d \opright, a \opright, c \opright
  - \opleft \opleft \opleft b, d \opright, c \opright, a \opright.
  \end{align*}
This identity can be used to reduce the number of association types that we
need to consider when studying identities of higher degree.

\section{Ternary intermolecular recombination}

We now present the main results of this paper: a complete and minimal set of
polynomial identities of degree $\le 9$ for ternary intermolecular
recombination.

\subsection{Degree 3}
Every polynomial identity in degree 3 satisfied by ternary intermolecular
recombination is a consequence of the complete symmetry identity. Indeed,
equation \eqref{completesymmetry} implies that there is only one monomial in
degree 3, namely $\{ a, b, c \}$, and hence the only possibly polynomial
identity in degree 3 is $\{ a, b, c \} = 0$ for all $a, b, c$; this clearly
does not hold.

\subsection{Degree 5}
In degree 5 there are three distinct association types for a ternary operation:
  \[
  \opleft \opleft a,b,c \opright,d,e \opright,
  \quad
  \opleft a,\opleft b,c,d \opright,e \opright,
  \quad
  \opleft a,b,\opleft c,d,e \opright \opright.
  \]
Equation \eqref{completesymmetry} implies that we need to consider only the
first type, and that we need to consider only 10 distinct monomials in this
type; we order these monomials lexicographically:
  \begin{alignat*}{4}
  &\opleft \opleft a,b,c \opright,d,e \opright, &\quad
  &\opleft \opleft a,b,d \opright,c,e \opright, &\quad
  &\opleft \opleft a,b,e \opright,c,d \opright, &\quad
  &\opleft \opleft a,c,d \opright,b,e \opright,
  \\
  &\opleft \opleft a,c,e \opright,b,e \opright, &\quad
  &\opleft \opleft a,d,e \opright,b,c \opright, &\quad
  &\opleft \opleft b,c,d \opright,a,e \opright, &\quad
  &\opleft \opleft b,c,e \opright,a,d \opright,
  \\
  &\opleft \opleft b,d,e \opright,a,c \opright, &\quad
  &\opleft \opleft c,d,e \opright,a,b \opright.
  \end{alignat*}
Any polynomial identity in degree 5 satisfied by ternary intermolecular
recombination is a linear combination of these 10 monomials.

\begin{lemma} \label{degree5}
Every polynomial identity of degree 5 satisfied by ternary intermolecular
recombination is a consequence of the complete symmetry identity in degree 3.
\end{lemma}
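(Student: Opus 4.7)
My plan is to follow the computational paradigm from Section 2 applied to degree 5. By complete symmetry applied to the outer operation, one may restrict attention to the single association type $\opleft\opleft a,b,c\opright,d,e\opright$; by complete symmetry applied to the inner operation, the 10 monomials listed above, indexed by the 3-element subsets $S \subset \{a,b,c,d,e\}$, span the remaining monomials. Any polynomial identity in degree 5 then corresponds to a vector in the nullspace of an integer expansion matrix $E$ whose columns are the expansions of these 10 monomials in the basis of $R(X,3)$, obtained by applying formula \eqref{operation} twice with formal inputs $v = (v_1, v_2, v_3)$.

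The next step is to produce $E$ explicitly. Expanding $\opleft\opleft v_1, v_2, v_3\opright, v_4, v_5\opright$ is an $S_3 \times S_3$ double sum of $36$ ordered triples $(x_1, y_2, z_3)$, which a short bookkeeping calculation consolidates into 18 distinct triples each of coefficient $2$: precisely those for which the unordered set $\{x, y, z\}$ contains $\{v_4, v_5\}$ and meets $\{v_1, v_2, v_3\}$ in exactly one element. Only ordered triples with three distinct variables can occur, so $E$ is $60 \times 10$ with highly structured columns.

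Rewriting the nullspace condition, a vector $(c_S)$ indexed by the 3-subsets $S$ lies in $\ker E$ if and only if, for every 2-subset $U \subset \{a, b, c, d, e\}$, one has $\sum_{S \supset U} c_S = 0$. Thus the lemma reduces to showing that the $\binom{[5]}{2} \times \binom{[5]}{3}$ set-inclusion incidence matrix $W$ is invertible. This can be verified by computer algebra (using the row canonical form or Hermite normal form machinery of Section 2), or conceptually by noting that $W^t W = 3 I + A$, where $A$ is the adjacency matrix of the Johnson graph $J(5,3)$ on 3-subsets (edges when $|S \cap S'| = 2$), whose spectrum is $\{6, 1, -2\}$; hence $W^t W$ has positive spectrum $\{9, 4, 1\}$ and $W$ is invertible.

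The main obstacle is the tedious but routine bookkeeping in the 36-term expansion of a generic degree-5 monomial; once the columns of $E$ are pinned down, the invertibility of $W$ finishes the argument and no identities beyond complete symmetry appear in degree 5.
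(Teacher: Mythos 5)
Your proposal is correct, and while the overall framework (expansion matrix $E$, identities = nullspace vectors) is the same as the paper's, your key step is genuinely different and arguably better. The paper verifies that $E$ has rank $10$ by selecting the ten rows indexed by the increasing triples $(x_1,y_2,z_3)$ with $x<y<z$ and checking by computer that the resulting $10\times 10$ submatrix row-reduces to the identity; that submatrix is in fact exactly twice the inclusion matrix $W$ you identify (row $T$ has entries $2$ in precisely the columns $S$ with $S\supset T^c$), so your combinatorial reformulation $\ker E=\ker W$ is implicit in the paper's Table 7 but never made explicit there. Your derivation of the column structure of $E$ is right --- the $36$ terms collapse to coefficient $2$ on all $18$ orderings of the three sets $\{v_i,v_4,v_5\}$, matching the displayed expansion of $\opleft\opleft a,b,c\opright,d,e\opright$ --- and your proof that $W$ is invertible via $W^tW=3I+A$ with $A$ the adjacency matrix of $J(5,3)$ (spectrum $6,1,-2$, hence $W^tW$ positive definite) is a correct replacement for the paper's machine computation; alternatively one could cite Gottlieb's theorem on the rank of inclusion matrices $W_{s,t}(n)$ with $s+t\le n$. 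What your route buys is a computer-free argument that visibly generalizes: for $n$-ary recombination in degree $2n-1$ the same reduction leads to the $(n{-}1)$-versus-$n$ inclusion matrix of a $(2n{-}1)$-set, which is invertible for the same reason, so your method points directly at Conjecture 1, which the paper's ad hoc submatrix check does not.
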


\begin{proof}
Consider the expansions of the 10 monomials in degree 5 using two applications
of ternary intermolecular recombination. For example, since
  \begin{align*}
  \opleft a,b,c \opright
  &=
  (a_1,b_2,c_3) + (a_1,c_2,b_3) + (b_1,a_2,c_3)
  \\
  &\quad
  +
  (b_1,c_2,a_3) + (c_1,a_2,b_3) + (c_1,b_2,a_3),
  \end{align*}
we obtain
  \begin{align*}
  \opleft \opleft a,b,c \opright,d,e \opright
  &=
  2 (a_1,d_2,e_3) + 2 (a_1,e_2,d_3) + 2 (d_1,a_2,e_3) + 2 (d_1,e_2,a_3)
  \\
  &\quad
  +
  2 (e_1,a_2,d_3) + 2 (e_1,d_2,a_3) + 2 (b_1,d_2,e_3) + 2 (b_1,e_2,d_3)
  \\
  &\quad
  +
  2 (d_1,b_2,e_3) + 2 (d_1,e_2,b_3) + 2 (e_1,b_2,d_3) + 2 (e_1,d_2,b_3)
  \\
  &\quad
  +
  2 (c_1,d_2,e_3) + 2 (c_1,e_2,d_3) + 2 (d_1,c_2,e_3) + 2 (d_1,e_2,c_3)
  \\
  &\quad
  +
  2 (e_1,c_2,d_3) + 2 (e_1,d_2,c_3).
  \end{align*}
Each term in these 10 expansions has the form $( x_1, y_2, z_3 )$ where
$(x,y,z)$ is a permutation of a 3-element subset of $\{ a, b, c, d, e \}$.
There are 60 such triples, which we order lexicographically:
  \[
  \begin{array}{cccccc}
  (a_1,b_2,c_3), &
  (a_1,b_2,d_3), &
  (a_1,b_2,e_3), &
  (a_1,c_2,b_3), &
  (a_1,c_2,d_3), &
  (a_1,c_2,e_3),
  \\
  \vdots & \vdots & \vdots & \vdots & \vdots & \vdots
  \\
  (e_1,c_2,a_3), &
  (e_1,c_2,b_3), &
  (e_1,c_2,d_3), &
  (e_1,d_2,a_3), &
  (e_1,d_2,b_3), &
  (e_1,d_2,c_3).
  \end{array}
  \]
We construct the $60 \times 10$ matrix $E$ in which entry $(i,j)$ contains the
coefficient of triple $i$ in the expansion of monomial $j$. The polynomial
identities in degree 5 for ternary intermolecular recombination correspond to
the nonzero vectors in the nullspace of $E$. It suffices to show that this
matrix has full rank, and for this it suffices to find 10 rows for which the
corresponding $10 \times 10$ submatrix has full rank. Rows 1, 2, 3, 5, 6, 9,
17, 18, 21 and 33 produce the submatrix displayed in Table
\ref{degree5submatrix}; the row canonical form of this submatrix is the
identity matrix.
\end{proof}

  \begin{table}
  \begin{center}
  \[
  \begin{array}{rc|rrrrrrrrrr}
  \text{row} & \text{triple} & \multicolumn{10}{c}{\text{matrix entries}} \\ \hline
  &&&&&&&&&&& \\[-10pt]
   1 & ( a_1, b_2, c_3 ) & 0 & 0 & 0 & 0 & 0 & 2 & 0 & 0 & 2 & 2 \\
   2 & ( a_1, b_2, d_3 ) & 0 & 0 & 0 & 0 & 2 & 0 & 0 & 2 & 0 & 2 \\
   3 & ( a_1, b_2, e_3 ) & 0 & 0 & 0 & 2 & 0 & 0 & 2 & 0 & 0 & 2 \\
   5 & ( a_1, c_2, d_3 ) & 0 & 0 & 2 & 0 & 0 & 0 & 0 & 2 & 2 & 0 \\
   6 & ( a_1, c_2, e_3 ) & 0 & 2 & 0 & 0 & 0 & 0 & 2 & 0 & 2 & 0 \\
   9 & ( a_1, d_2, e_3 ) & 2 & 0 & 0 & 0 & 0 & 0 & 2 & 2 & 0 & 0 \\
  17 & ( b_1, c_2, d_3 ) & 0 & 0 & 2 & 0 & 2 & 2 & 0 & 0 & 0 & 0 \\
  18 & ( b_1, c_2, e_3 ) & 0 & 2 & 0 & 2 & 0 & 2 & 0 & 0 & 0 & 0 \\
  21 & ( b_1, d_2, e_3 ) & 2 & 0 & 0 & 2 & 2 & 0 & 0 & 0 & 0 & 0 \\
  33 & ( c_1, d_2, e_3 ) & 2 & 2 & 2 & 0 & 0 & 0 & 0 & 0 & 0 & 0
  \end{array}
  \]
  \end{center}
  \caption{Submatrix for the proof of Lemma \ref{degree5}}
  \label{degree5submatrix}
  \end{table}

\subsection{Degree 7}
We now consider 7 distinct molecules, each divided into 3 submolecules:
  \begin{alignat*}{4}
  a &= ( a_1, a_2, a_3 ), &\quad
  b &= ( b_1, b_2, b_3 ), &\quad
  c &= ( c_1, c_2, c_3 ), &\quad
  d &= ( d_1, d_2, d_3 ), \\
  e &= ( e_1, e_2, e_3 ), &\quad
  f &= ( f_1, f_2, f_3 ), &\quad
  g &= ( g_1, g_2, g_3 ).
  \end{alignat*}
After performing three applications of ternary intermolecular recombination on
these molecules with some order of operations and some permutation of
molecules, we obtain a linear combination of triples of the form $( x_1, y_2,
z_3 )$ where $( x, y, z )$ is a permutation of a 3-element subset of $\{ a, b,
c, d, e, f, g \}$. The number of triples which can occur as terms in these
linear combinations is therefore
  \[
  3! \binom73 = 210.
  \]
We order these triples lexicographically by the permutation $( x, y, z )$.

Equation \eqref{completesymmetry} implies that we need only two association
types in degree 7,
  \[
  \opleft \opleft \opleft a, b, c \opright, d, e \opright, f, g \opright
  \quad \text{and} \quad
  \opleft \opleft a, b, c \opright, \opleft d, e, f \opright, g \opright,
  \]
which contain respectively the following numbers of distinct monomials,
  \[
  \frac{7!}{3!2!2!} = 210
  \quad \text{and} \quad
  \frac{1}{2!} \cdot \frac{7!}{3!3!1!} = 70.
  \]
Within each association type, we order the monomials lexicographically by the
permutation of the variables. Any polynomial identity in degree 7 satisfied by
ternary intermolecular recombination is a linear combination of these 280
nonassociative monomials.

The expansion matrix $E$ in degree 7 has 210 rows and 280 columns; entry
$(i,j)$ is the coefficient of triple $i$ in the expansion of nonassociative
monomial $j$. The expansions of the two association types are displayed in
Tables \ref{degree7type1expansion} and \ref{degree7type2expansion}. Each
expansion has $6^3 = 216$ terms; after collecting terms, the first type
produces a linear combination of 30 triples with multiplicities 4 and 12, and
the second type produces a linear combination of 54 triples with multiplicity
4. We use Maple to find that the matrix $E$ has rank 35, and so its nullspace
has dimension 245. We compute the row canonical form of $E$, and find that
every entry of the RCF is an integer. We obtain the canonical basis of the
nullspace of $E$; this is a list of 245 vectors of dimension 280 with integer
components. We sort these vectors by increasing Euclidean norm; the list of
squared norms is displayed in Table \ref{degree7canonicalnorms}.

  \begin{table}
  \begin{alignat*}{5}
          4 [ a_1, f_2, g_3 ] &
  \,+\,&  4 [ a_1, g_2, f_3 ] &
  \,+\,&  4 [ b_1, f_2, g_3 ] &
  \,+\,&  4 [ b_1, g_2, f_3 ] &
  \,+\,&  4 [ c_1, f_2, g_3 ] \\
  \,+\,   4 [ c_1, g_2, f_3 ] &
  \,+\,& 12 [ d_1, f_2, g_3 ] &
  \,+\,& 12 [ d_1, g_2, f_3 ] &
  \,+\,& 12 [ e_1, f_2, g_3 ] &
  \,+\,& 12 [ e_1, g_2, f_3 ] \\
  \,+\,   4 [ f_1, a_2, g_3 ] &
  \,+\,&  4 [ f_1, b_2, g_3 ] &
  \,+\,&  4 [ f_1, c_2, g_3 ] &
  \,+\,& 12 [ f_1, d_2, g_3 ] &
  \,+\,& 12 [ f_1, e_2, g_3 ] \\
  \,+\,   4 [ f_1, g_2, a_3 ] &
  \,+\,&  4 [ f_1, g_2, b_3 ] &
  \,+\,&  4 [ f_1, g_2, c_3 ] &
  \,+\,& 12 [ f_1, g_2, d_3 ] &
  \,+\,& 12 [ f_1, g_2, e_3 ] \\
  \,+\,   4 [ g_1, a_2, f_3 ] &
  \,+\,&  4 [ g_1, b_2, f_3 ] &
  \,+\,&  4 [ g_1, c_2, f_3 ] &
  \,+\,& 12 [ g_1, d_2, f_3 ] &
  \,+\,& 12 [ g_1, e_2, f_3 ] \\
  \,+\,   4 [ g_1, f_2, a_3 ] &
  \,+\,&  4 [ g_1, f_2, b_3 ] &
  \,+\,&  4 [ g_1, f_2, c_3 ] &
  \,+\,& 12 [ g_1, f_2, d_3 ] &
  \,+\,& 12 [ g_1, f_2, e_3 ]
  \end{alignat*}
  \caption{Expansion of monomial
  $\opleft \opleft \opleft a, b, c \opright, d, e \opright, f, g \opright$}
  \label{degree7type1expansion}
  \begin{alignat*}{5}
         4 [ a_1, d_2, g_3 ] &
  \,+\,& 4 [ a_1, e_2, g_3 ] &
  \,+\,& 4 [ a_1, f_2, g_3 ] &
  \,+\,& 4 [ a_1, g_2, d_3 ] &
  \,+\,& 4 [ a_1, g_2, e_3 ] \\
  \,+\,  4 [ a_1, g_2, f_3 ] &
  \,+\,& 4 [ b_1, d_2, g_3 ] &
  \,+\,& 4 [ b_1, e_2, g_3 ] &
  \,+\,& 4 [ b_1, f_2, g_3 ] &
  \,+\,& 4 [ b_1, g_2, d_3 ] \\
  \,+\,  4 [ b_1, g_2, e_3 ] &
  \,+\,& 4 [ b_1, g_2, f_3 ] &
  \,+\,& 4 [ c_1, d_2, g_3 ] &
  \,+\,& 4 [ c_1, e_2, g_3 ] &
  \,+\,& 4 [ c_1, f_2, g_3 ] \\
  \,+\,  4 [ c_1, g_2, d_3 ] &
  \,+\,& 4 [ c_1, g_2, e_3 ] &
  \,+\,& 4 [ c_1, g_2, f_3 ] &
  \,+\,& 4 [ d_1, a_2, g_3 ] &
  \,+\,& 4 [ d_1, b_2, g_3 ] \\
  \,+\,  4 [ d_1, c_2, g_3 ] &
  \,+\,& 4 [ d_1, g_2, a_3 ] &
  \,+\,& 4 [ d_1, g_2, b_3 ] &
  \,+\,& 4 [ d_1, g_2, c_3 ] &
  \,+\,& 4 [ e_1, a_2, g_3 ] \\
  \,+\,  4 [ e_1, b_2, g_3 ] &
  \,+\,& 4 [ e_1, c_2, g_3 ] &
  \,+\,& 4 [ e_1, g_2, a_3 ] &
  \,+\,& 4 [ e_1, g_2, b_3 ] &
  \,+\,& 4 [ e_1, g_2, c_3 ] \\
  \,+\,  4 [ f_1, a_2, g_3 ] &
  \,+\,& 4 [ f_1, b_2, g_3 ] &
  \,+\,& 4 [ f_1, c_2, g_3 ] &
  \,+\,& 4 [ f_1, g_2, a_3 ] &
  \,+\,& 4 [ f_1, g_2, b_3 ] \\
  \,+\,  4 [ f_1, g_2, c_3 ] &
  \,+\,& 4 [ g_1, a_2, d_3 ] &
  \,+\,& 4 [ g_1, a_2, e_3 ] &
  \,+\,& 4 [ g_1, a_2, f_3 ] &
  \,+\,& 4 [ g_1, b_2, d_3 ] \\
  \,+\,  4 [ g_1, b_2, e_3 ] &
  \,+\,& 4 [ g_1, b_2, f_3 ] &
  \,+\,& 4 [ g_1, c_2, d_3 ] &
  \,+\,& 4 [ g_1, c_2, e_3 ] &
  \,+\,& 4 [ g_1, c_2, f_3 ] \\
  \,+\,  4 [ g_1, d_2, a_3 ] &
  \,+\,& 4 [ g_1, d_2, b_3 ] &
  \,+\,& 4 [ g_1, d_2, c_3 ] &
  \,+\,& 4 [ g_1, e_2, a_3 ] &
  \,+\,& 4 [ g_1, e_2, b_3 ] \\
  \,+\,  4 [ g_1, e_2, c_3 ] &
  \,+\,& 4 [ g_1, f_2, a_3 ] &
  \,+\,& 4 [ g_1, f_2, b_3 ] &
  \,+\,& 4 [ g_1, f_2, c_3 ] &
  \end{alignat*}
  \caption{Expansion of monomial
  $\opleft \opleft a, b, c \opright, \opleft d, e, f \opright, g \opright$}
  \label{degree7type2expansion}
  \end{table}

  \begin{table}
  \begin{tabular}{rrrrrrrrrr}
     4 &    4 &    4 &    4 &    4 &    4 &    4 &    4 &    6 &    6 \\
     6 &    6 &    6 &    6 &    6 &    6 &    6 &    6 &    6 &    6 \\
     6 &    6 &    6 &    6 &    6 &    6 &    6 &    6 &    6 &    6 \\
     6 &    8 &    8 &    8 &    8 &    8 &    8 &    8 &    8 &    8 \\
    10 &   10 &   10 &   10 &   10 &   10 &   10 &   10 &   10 &   10 \\
    10 &   10 &   12 &   12 &   12 &   12 &   12 &   14 &   16 &   32 \\
    32 &   34 &   34 &   34 &   34 &   52 &   52 &   52 &   52 &   52 \\
    52 &   80 &   80 &   82 &   82 &   86 &   86 &   88 &   94 &   94 \\
    96 &   96 &   98 &  100 &  100 &  100 &  100 &  100 &  100 &  104 \\
   104 &  104 &  108 &  108 &  110 &  110 &  110 &  110 &  110 &  110 \\
   118 &  118 &  122 &  122 &  128 &  128 &  134 &  134 &  134 &  134 \\
   136 &  138 &  138 &  140 &  140 &  144 &  146 &  146 &  146 &  152 \\
   152 &  156 &  156 &  156 &  156 &  158 &  158 &  158 &  158 &  158 \\
   162 &  162 &  162 &  170 &  170 &  170 &  170 &  174 &  174 &  174 \\
   176 &  176 &  178 &  178 &  182 &  184 &  186 &  188 &  188 &  188 \\
   188 &  190 &  190 &  190 &  190 &  198 &  206 &  210 &  216 &  228 \\
   232 &  234 &  240 &  240 &  242 &  246 &  246 &  246 &  246 &  246 \\
   252 &  254 &  256 &  266 &  270 &  728 &  728 &  744 &  744 &  756 \\
   756 &  876 &  876 & 1210 & 1210 & 1216 & 1216 & 1244 & 1244 & 1350 \\
  1386 & 1386 & 1402 & 1402 & 1408 & 1412 & 1414 & 1414 & 1440 & 1470 \\
  1482 & 1484 & 1494 & 1516 & 1568 & 1592 & 1678 & 1758 & 1818 & 1824 \\
  1852 & 1894 & 1908 & 1910 & 1918 & 1946 & 1946 & 1982 & 2002 & 2026 \\
  2064 & 2174 & 2196 & 2226 & 2246 & 2272 & 2312 & 2324 & 2326 & 2340 \\
  2346 & 2366 & 2390 & 2396 & 2402 & 2418 & 2430 & 2472 & 2490 & 2562 \\
  2634 & 2640 & 2654 & 2668 & 2844
  \end{tabular}
  \smallskip
  \caption{Squared norms of canonical basis vectors in degree 7}
  \label{degree7canonicalnorms}
  \end{table}

  \begin{table}
  \begin{tabular}{rrrrrrrrrrrrrrr}
   4 &  4 &  4 &  4 &  4 &  4 &  4 &  4 &  4 &  4 &  4 &  4 &  4 &  4 &  4 \\
   4 &  4 &  4 &  4 &  4 &  4 &  4 &  4 &  4 &  4 &  4 &  4 &  4 &  4 &  4 \\
   4 &  4 &  4 &  4 &  4 &  4 &  4 &  4 &  4 &  6 &  6 &  6 &  6 &  6 &  6 \\
   6 &  6 &  6 &  6 &  6 &  6 &  6 &  6 &  6 &  6 &  6 &  6 &  6 &  6 &  6 \\
   6 &  6 &  6 &  6 &  6 &  6 &  6 &  6 &  6 &  6 &  6 &  6 &  6 &  8 &  8 \\
   8 &  8 &  8 &  8 &  8 &  8 &  8 &  8 &  8 &  8 &  8 &  8 &  8 &  8 &  8 \\
   8 &  8 &  8 &  8 &  8 &  8 &  8 &  8 &  8 &  8 &  8 &  8 &  8 &  8 &  8 \\
   8 &  8 &  8 &  8 &  8 &  8 &  8 &  8 &  8 &  8 &  8 &  8 & 10 & 10 & 10 \\
  10 & 10 & 10 & 10 & 10 & 10 & 10 & 10 & 12 & 12 & 12 & 12 & 12 & 12 & 12 \\
  12 & 12 & 12 & 12 & 12 & 12 & 12 & 12 & 12 & 12 & 12 & 12 & 12 & 12 & 12 \\
  12 & 12 & 12 & 12 & 12 & 12 & 12 & 14 & 14 & 14 & 14 & 14 & 14 & 14 & 14 \\
  14 & 14 & 14 & 14 & 14 & 14 & 14 & 16 & 16 & 16 & 16 & 16 & 16 & 16 & 16 \\
  16 & 16 & 16 & 16 & 16 & 16 & 16 & 16 & 18 & 18 & 18 & 18 & 18 & 18 & 18 \\
  18 & 20 & 20 & 20 & 20 & 20 & 20 & 20 & 20 & 20 & 20 & 20 & 22 & 22 & 22 \\
  22 & 22 & 22 & 22 & 22 & 22 & 24 & 24 & 24 & 24 & 24 & 24 & 24 & 24 & 24 \\
  26 & 26 & 26 & 26 & 26 & 28 & 28 & 28 & 28 & 28 & 28 & 30 & 30 & 30 & 32 \\
  32 & 34 & 34 & 34 & 38 &
  \end{tabular}
  \smallskip
  \caption{Squared norms of reduced basis vectors in degree 7}
  \label{degree7reducednorms}
  \end{table}

We now perform further computations, using modular arithmetic to save memory,
to determine a set of generators for the nullspace as a module over the
symmetric group $S_7$. (We use $p = 101$, the smallest prime greater than 100;
any prime greater than the degree of the identities would produce the same
dimensions.) For each basis vector in the nullspace, we apply all 5040
permutations of $a$, $b$, $c$, $d$, $e$, $f$, $g$ to the corresponding
identity, and store the results in a matrix of size $5040 \times 280$. We
compute the row canonical form of this matrix; its nonzero rows form a basis of
the $S_7$-module generated by the identity, and its rank is the dimension of
this module. We process the nullspace basis vectors in order, saving previous
results (the nonzero rows which form a basis of the $S_7$-module generated by
the previous basis vectors). At each stage, an increase in the rank implies
that the current identity is a new generator of the nullspace as a module over
$S_7$. The results of these computations show that the canonical basis vectors
with positions 1, 10 and 60 (after the basis vectors have been sorted by
increasing Euclidean norm) represent polynomial identities which generate the
entire nullspace as an $S_7$-module:
  \allowdisplaybreaks
  \begin{align*}
  I
  &=
     \opleft \opleft \opleft a, b, c \opright, d, e \opright, f, g \opright
  -  \opleft \opleft \opleft a, b, e \opright, c, d \opright, f, g \opright
  -  \opleft \opleft \opleft b, c, d \opright, a, e \opright, f, g \opright
  \\
  &\quad
  +  \opleft \opleft \opleft b, d, e \opright, a, c \opright, f, g \opright,
  \\
  J
  &=
     \opleft \opleft \opleft a, b, e \opright, c, g \opright, d, f \opright
  -  \opleft \opleft \opleft a, b, e \opright, f, g \opright, c, d \opright
  -  \opleft \opleft \opleft a, c, e \opright, b, g \opright, d, f \opright
  \\
  &\quad
  +  \opleft \opleft \opleft a, c, e \opright, f, g \opright, b, d \opright
  +  \opleft \opleft \opleft a, e, f \opright, b, g \opright, c, d \opright
  -  \opleft \opleft \opleft a, e, f \opright, c, g \opright, b, d \opright,
  \\
  K
  &=
   2 \opleft \opleft \opleft a, b, c \opright, d, e \opright, f, g \opright
  -  \opleft \opleft \opleft a, b, c \opright, d, g \opright, e, f \opright
  -2 \opleft \opleft \opleft a, b, c \opright, e, g \opright, d, f \opright
  \\
  &\quad
  +2 \opleft \opleft \opleft a, b, d \opright, c, e \opright, f, g \opright
  -2 \opleft \opleft \opleft a, b, d \opright, c, g \opright, e, f \opright
  -  \opleft \opleft \opleft a, b, d \opright, e, g \opright, c, f \opright
  \\
  &\quad
  -  \opleft \opleft \opleft a, b, e \opright, c, d \opright, f, g \opright
  +  \opleft \opleft \opleft a, b, e \opright, c, g \opright, d, f \opright
  +3 \opleft \opleft \opleft a, b, g \opright, c, d \opright, e, f \opright
  \\
  &\quad
  -  \opleft \opleft \opleft a, c, d \opright, b, e \opright, f, g \opright
  -  \opleft \opleft \opleft b, c, d \opright, a, e \opright, f, g \opright
  +  \opleft \opleft a, b, e \opright, \opleft c, d, g \opright, f \opright.
  \end{align*}
Every identity in the nullspace is a linear combination of permutations of
these three identities, which have squared norms 4, 6 and 32; the second
identity has been normalized so that it begins with a positive coefficient.

We can get significantly better results using the Hermite normal form and
lattice basis reduction.

\begin{theorem} \label{degree7}
Every polynomial identity of degree 7 satisfied by ternary intermolecular
recombination is a consequence of complete symmetry in degree 3 and these three
identities in degree 7:
  \allowdisplaybreaks
  \begin{align*}
  P
  &=
  \opleft \opleft \opleft a, c, f \opright, e, g \opright, b, d \opright
  - \opleft \opleft \opleft a, c, g \opright, e, f \opright, b, d \opright
  - \opleft \opleft \opleft c, e, f \opright, a, g \opright, b, d \opright
  \\
  &\quad
  + \opleft \opleft \opleft c, e, g \opright, a, f \opright, b, d \opright,
  \\
  Q
  &=
  \opleft \opleft \opleft a, d, e \opright, b, g \opright, c, f \opright
  - \opleft \opleft \opleft a, d, e \opright, f, g \opright, b, c \opright
  - \opleft \opleft \opleft b, d, e \opright, a, g \opright, c, f \opright
  \\
  &\quad
  + \opleft \opleft \opleft b, d, e \opright, f, g \opright, a, c \opright
  + \opleft \opleft \opleft d, e, f \opright, a, g \opright, b, c \opright
  - \opleft \opleft \opleft d, e, f \opright, b, g \opright, a, c \opright,
  \\
  R
  &=
  \opleft \opleft \opleft a, b, c \opright, f, g \opright, d, e \opright
  + \opleft \opleft \opleft a, b, g \opright, d, f \opright, c, e \opright
  + \opleft \opleft \opleft a, c, d \opright, b, f \opright, e, g \opright
  \\
  &\quad
  - \opleft \opleft \opleft a, c, d \opright, b, g \opright, e, f \opright
  + \opleft \opleft \opleft a, c, d \opright, f, g \opright, b, e \opright
  - \opleft \opleft \opleft a, c, f \opright, b, d \opright, e, g \opright
  \\
  &\quad
  + \opleft \opleft \opleft a, c, g \opright, d, f \opright, b, e \opright
  - \opleft \opleft \opleft a, d, f \opright, c, g \opright, b, e \opright
  - \opleft \opleft \opleft a, d, g \opright, b, c \opright, e, f \opright
  \\
  &\quad
  - \opleft \opleft \opleft a, f, g \opright, b, c \opright, d, e \opright
  + \opleft \opleft \opleft b, c, d \opright, a, g \opright, e, f \opright
  - \opleft \opleft a, d, g \opright, \opleft b, c, f \opright, e \opright.
  \end{align*}
Identities $P$ and $Q$ are independent: neither implies the other. Identity $R$
implies both $P$ and $Q$, but identities $P$ and $Q$ together do not imply $R$.
\end{theorem}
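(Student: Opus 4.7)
The plan is to follow the same computational strategy used in Lemma \ref{degree5} and in the preliminary analysis of degree 7 given just before the theorem statement, but now carried out with the HNF/LLL variant that the author has just motivated. First I would re-use the $210 \times 280$ expansion matrix $E$ constructed from Tables \ref{degree7type1expansion} and \ref{degree7type2expansion}, confirming again that $\operatorname{rank}(E) = 35$ and $\dim \ker(E) = 245$. Then, instead of taking the row canonical form, I would compute the Hermite normal form of $E^t$ together with a transform matrix $U$, and apply LLL to the bottom $245$ rows of $U$ (method (b) of Section~2); this produces an integer lattice basis of $L(E)$ whose squared norms match those in Table \ref{degree7reducednorms}. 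Sorting these vectors by increasing length gives a natural ordering in which to search for short generators.

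Next, exactly as described for the canonical basis, I would process this reduced list one identity at a time. For each candidate identity $I$, I would form the $5040 \times 280$ matrix whose rows are the coefficient vectors of all $S_7$-images of $I$, and incrementally row-reduce (modulo $p = 101$, to control memory) against the accumulated basis of the $S_7$-submodule generated by the previously-selected identities. A candidate is a new generator exactly when appending its permuted rows strictly increases the rank. I would continue until the accumulated rank reaches $245$. Because the LLL basis contains many more short vectors than the RCF basis, I expect this process to halt after choosing three identities of small squared norm; these would be identified as $P$ (norm $4$), $Q$ (norm $6$), and $R$ (norm $12$), matching the forms displayed in the theorem.

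Finally, to establish the four implication/non-implication claims, I would run four auxiliary rank computations over $\mathbb{F}_{101}$. Let $\langle S \rangle$ denote the $S_7$-module generated by a set $S$ of identities, computed as above. The equalities $\dim \langle P \rangle < \dim \langle P, Q \rangle$ and $\dim \langle Q \rangle < \dim \langle P, Q \rangle$ prove that $P$ and $Q$ are independent; the equality $\dim \langle R \rangle = \dim \langle R, P \rangle = \dim \langle R, Q \rangle$ (equivalently, showing that the permuted rows of $P$ and of $Q$ lie in the row space of the permuted rows of $R$) proves that $R$ implies both $P$ and $Q$; and the strict inequality $\dim \langle P, Q \rangle < \dim \langle P, Q, R \rangle = 245$ proves that $R$ is not a consequence of $P$ and $Q$.

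The main obstacle is purely computational: the expansion matrix, the $245$ LLL-reduced nullspace vectors, and the $5040$-row permuted-image matrices are all at the edge of what Maple will handle comfortably in exact integer arithmetic, which is precisely why the modular reduction mod $101$ is needed for the $S_7$-module rank computations. Theoretically, the only subtle point is ensuring that the three chosen identities are really \emph{minimal} generators in the sense claimed; this is taken care of automatically by the greedy incremental procedure, since an identity is retained only when it strictly enlarges the $S_7$-submodule generated by the ones already chosen, and the length ordering guarantees that the first generator found at each stage is as short as the LLL basis permits.
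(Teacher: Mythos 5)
Your proposal is correct and follows essentially the same route as the paper: LLL-reduced Hermite normal form of $E^t$ to get short nullspace vectors, then incremental $S_7$-module rank computations (mod $101$) to identify $P$, $Q$, $R$ and to verify the stated dependence and independence relations via exactly the dimension comparisons you list (the paper records $\dim\langle P\rangle = 105$, $\dim\langle Q\rangle = 127$, $\dim\langle P,Q\rangle = 155$, $\dim\langle R\rangle = \dim\langle P,Q,R\rangle = 245$). One small caveat: your closing remark that the greedy procedure guarantees minimality is not quite right, since $R$ alone already generates the full nullspace and thus renders $P$ and $Q$ redundant a posteriori; but the theorem does not actually claim minimality of the generating set, so this does not affect the proof.
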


\begin{proof}
We apply the Maple command
  \[
  \texttt{HermiteForm( Transpose(E), output='U', method='integer[reduced]' ):}
  \]
to the transpose (size $280 \times 210$) of the expansion matrix $E$, and
obtain a transform matrix $U$ (size $280 \times 280$). The bottom 245 rows of
$U$ are a basis for the integral nullspace lattice of $E$. We sort these
vectors by increasing Euclidean norm; the squared norms are displayed in Table
\ref{degree7reducednorms}. It is clear by comparing Tables
\ref{degree7canonicalnorms} and \ref{degree7reducednorms} that the basis
vectors obtained in this way are generally much shorter than those obtained
from the RCF. Identities $P$, $Q$ and $R$ have coefficients $\pm 1$ and the
squared norms of the corresponding coefficient vectors are 4, 6 and 12. (These
identities correspond to the reduced basis vectors in positions 1, 40 and 129
after the reduced basis vectors have been sorted by increasing Euclidean norm.)
These three identities can be verified by expanding each term using three
applications of ternary intermolecular recombination, and then checking that
the results collapse to zero.

To prove the stated dependence and independence relations we proceed as
follows. Given an identity $I(a,b,c,d,e,f,g)$ we apply all 5040 permutations of
$a$, $b$, $c$, $d$, $e$, $f$, $g$ and store the coefficients of the resulting
identities in a $5040 \times 280$ matrix. The rank of this matrix is the
dimension of the $S_7$-submodule of identities generated by $I$. To check
whether another identity $J(a,b,c,d,e,f,g)$ is implied by $I$ we simply
determine whether $J$ is in the row space of this matrix. We find that identity
$P$ produces dimension 105, and identity $Q$ produces dimension 127; stacking
the two corresponding matrices together, we see that these two identities
together produce dimension 155. However, identity $R$ produces dimension 245,
and this is also the rank produced by the three identities together. Since 245
is also the dimension of the nullspace of the expansion matrix, it follows that
every identity in degree 7 is a consequence of the identity $R$, in the sense
that every identity in degree 7 is a linear combination of permutations of
$R(a,b,c,d,e,f,g)$.
\end{proof}

The identity $R$ of Theorem \ref{degree7} will be called the \textbf{ternary
recombination identity}. The last term in $R$ is the only term in identities
$P$, $Q$ and $R$ which has the second association type, so we have the
following result.

\begin{corollary} \label{rewritesecondtype}
Every polynomial identity of degree $\le 7$ satisfied by ternary intermolecular
recombination is a consequence of complete symmetry and the following identity,
which expresses any monomial in the second association type as a linear
combination of monomials in the first association type:
  \begin{align*}
  &
  \opleft \opleft a, b, c \opright, \opleft d, e, f \opright, g \opright
  =
  \\
  &\;\;\;\;
  \opleft \opleft \opleft a, d, e \opright, f, c \opright, b, g \opright
  + \opleft \opleft \opleft a, d, c \opright, b, f \opright, e, g \opright
  + \opleft \opleft \opleft a, e, b \opright, d, f \opright, g, c \opright
  \\
  &
  - \opleft \opleft \opleft a, e, b \opright, d, c \opright, g, f \opright
  + \opleft \opleft \opleft a, e, b \opright, f, c \opright, d, g \opright
  - \opleft \opleft \opleft a, e, f \opright, d, b \opright, g, c \opright
  \\
  &
  + \opleft \opleft \opleft a, e, c \opright, b, f \opright, d, g \opright
  - \opleft \opleft \opleft a, b, f \opright, e, c \opright, d, g \opright
  - \opleft \opleft \opleft a, b, c \opright, d, e \opright, g, f \opright
  \\
  &
  - \opleft \opleft \opleft a, f, c \opright, d, e \opright, b, g \opright
  + \opleft \opleft \opleft d, e, b \opright, a, c \opright, g, f \opright.
  \end{align*}
(We have permuted the variables so that the term in the second association type
has the identity permutation.)
\end{corollary}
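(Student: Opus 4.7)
The plan is to reduce the claim to Theorem \ref{degree7} and then extract the displayed identity by a single relabeling of variables. The degrees $3$ and $5$ are already handled: in degree $3$ there is only the monomial $\opleft a, b, c \opright$ modulo complete symmetry, so no identity can hold; in degree $5$ Lemma \ref{degree5} shows that the expansion map is injective. Thus only degree $7$ requires work, and by Theorem \ref{degree7} every degree-$7$ identity is a consequence of the ternary recombination identity $R$.

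It then suffices to show that the displayed identity is obtained from $R$ by a relabeling of variables followed by isolating the unique second-association-type term on the left. In $R$ that term is $-\opleft \opleft a, d, g \opright, \opleft b, c, f \opright, e \opright$, and I want it to become $+\opleft \opleft a, b, c \opright, \opleft d, e, f \opright, g \opright$ after moving it across. Using complete symmetry in degree $3$ to ignore the internal order of each inner bracket, I would apply the permutation $\sigma = (b\,d)(c\,e\,g) \in S_7$, which sends $a \mapsto a$, $b \mapsto d$, $c \mapsto e$, $d \mapsto b$, $e \mapsto g$, $f \mapsto f$, $g \mapsto c$. This maps the multiset $\{a,d,g\}$ onto $\{a,b,c\}$, the multiset $\{b,c,f\}$ onto $\{d,e,f\}$, and the outer argument $e$ onto $g$, as required.

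Carrying out $\sigma$ on each of the remaining eleven first-type monomials of $R$ is a purely mechanical substitution, yielding one by one the eleven first-type terms on the right side of the displayed identity with the signs as stated. Since $\sigma$ is invertible, the $S_7$-submodule generated by $\sigma(R)$ coincides with that generated by $R$, so Theorem \ref{degree7} transfers verbatim: every identity of degree $7$ (and hence of degree $\le 7$) is a consequence of complete symmetry together with the displayed identity.

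The main source of potential error is not conceptual but clerical: making sure that during the substitution the eleven images land in the listed order and with the correct signs. This can be cross-checked either by sorting each image monomial using complete symmetry and comparing with the displayed list term by term, or independently by expanding the displayed identity through three applications of ternary intermolecular recombination and verifying numerically that the result collapses to zero, exactly as was done for $P$, $Q$, and $R$ in the proof of Theorem \ref{degree7}.
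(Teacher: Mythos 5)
Your proposal is correct and matches the paper's (largely implicit) argument: the corollary follows from Theorem \ref{degree7} because $R$ contains exactly one second-association-type term, and one relabels variables so that term carries the identity permutation and solves for it. I verified that your permutation $\sigma = (b\,d)(c\,e\,g)$ sends each of the twelve terms of $R$ to the corresponding term of the displayed identity with the stated signs, so nothing further is needed.
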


\subsection{Degree 9} The number of permutations of 3-element subsets of a set
of 9 variables is clearly
  \[
  3! \binom93 = 504.
  \]
This is the number of rows in the expansion matrix in degree 9.

Equation \eqref{completesymmetry} implies that we need only four association
types in degree 9:
  \begin{align*}
  &\opleft \opleft \opleft \opleft a, b, c \opright, d, e \opright, f, g \opright, h, i \opright
  \; \text{with} \;
  \frac{9!}{3!2!2!2!} = 7560 \; \text{monomials},
  \\
  &\opleft \opleft \opleft a, b, c \opright, \opleft d, e, f \opright, g \opright, h, i \opright
  \; \text{with} \;
  \frac{1}{2!} \cdot \frac{9!}{3!3!1!2!} = 2520 \; \text{monomials},
  \\
  &\opleft \opleft \opleft a, b, c \opright, d, e \opright, \opleft f, g, h \opright, i \opright
  \; \text{with} \;
  \frac{9!}{3!2!3!1!} = 5040 \; \text{monomials}, \text{and}
  \\
  &\opleft \opleft a, b, c \opright, \opleft d, e, f \opright, \opleft g, h, i \opright \opright
  \; \text{with} \;
  \frac{1}{3!} \cdot \frac{9!}{3!3!3!} = 280 \; \text{monomials}.
  \end{align*}
The total is 15400; this is the number of columns in the expansion matrix.

We initialize the expansion matrix and then use modular arithmetic (again with
$p = 101$) to compute its rank; the result is 84, so the dimension of the
nullspace is 15316. To decide whether any nullspace vector represents a new
identity in degree 9, we need to determine the dimension of the subspace of the
nullspace which consists of all consequences of the ternary recombination
identity $R = R(a,b,c,d,e,f,g)$ from Theorem \ref{degree7}. To obtain
consequences in degree 9, we can either
  \begin{enumerate}
  \item[($i$)]
replace one of the variables $x \in \{ a, b, c, d, e, f, g \}$ by the triple
$\{x,h,i\}$ where $h$ and $i$ are two new variables, or
  \item[($ii$)]
embed the entire identity $R$ in a triple $\{R,h,i\}$.
  \end{enumerate}
This gives 8 identities in degree 9 which generate the $S_9$-module of all
consequences of the identity $R$ in degree 7:
  \begin{alignat*}{2}
  R( \{a,h,i\}, b, c, d, e, f, g ),
  \qquad
  R( a, \{b,h,i\}, c, d, e, f, g ),
  \\
  R( a, b, \{c,h,i\}, d, e, f, g ),
  \qquad
  R( a, b, c, \{d,h,i\}, e, f, g ),
  \\
  R( a, b, c, d, \{e,h,i\}, f, g ),
  \qquad
  R( a, b, c, d, e, \{f,h,i\}, g ),
  \\
  R( a, b, c, d, e, f, \{g,h,i\} ),
  \qquad
  \{ R( a, b, c, d, e, f, g ), h, i \}.
  \end{alignat*}
We need to ``straighten'' the terms of these identities: that is, we use the
complete symmetry identity to replace each monomial by an equivalent monomial
in one of the four association types listed above.

We now apply all $9!$ permutations of $a$, $b$, $c$, $d$, $e$, $f$, $g$, $h$,
$i$ to each of the 8 consequences of $R$ in degree 9, store these permuted
identities in the rows of a matrix with 15400 columns, and compute the rank of
the matrix. For $\ell = 1, 2, \hdots, 8$ the dimension of the subspace
generated by the first $\ell$ consequences of $R$ is as follows:
  \[
  14071, \;
  15036, \;
  15036, \;
  15036, \;
  15316, \;
  15316, \;
  15316, \;
  15316.
  \]
Since 15316 is also the dimension of the nullspace of the expansion matrix, it
follows that every polynomial identity in degree 9 satisfied by ternary
intermolecular recombination is a consequence of the ternary recombination
identity in degree 7. Hence there are no new identities in degree 9.

\section{Conclusion}

The results of Bremner \cite{Bremner}, Sverchkov \cite{Sverchkov} and the
present paper are consistent with the following conjectures for the general
case of $n$-ary intermolecular recombination.

\begin{conjecture} \label{conjecture1}
Every polynomial identity satisfied by $n$-ary intermolecular recombination in
degree $2n{-}1$ (that is, in which each term involves two applications of the
$n$-ary operation) is a consequence of complete symmetry in degree $n$.
\end{conjecture}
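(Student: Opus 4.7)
The plan is to show that the expansion matrix $E$ in degree $2n-1$ has full column rank, so that its right nullspace is trivial and no identity beyond the consequences of complete symmetry can hold. The argument reduces to a classical rank statement for incidence matrices of subsets of $[2n{-}1] := \{1, \hdots, 2n{-}1\}$.

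First, by complete symmetry in degree $n$, every multilinear monomial in degree $2n-1$ can be rewritten in the single association type $\opleft \opleft a_{j_1}, \hdots, a_{j_n} \opright, a_{i_1}, \hdots, a_{i_{n-1}} \opright$, so the distinct monomials $M_S$ are indexed by the $n$-subset $S \subseteq [2n{-}1]$ appearing inside the inner bracket, giving $\binom{2n-1}{n}$ of them; set $T := [2n{-}1] \setminus S$. Next I would expand each $M_S$ by two applications of Definition \ref{definitionrecombination}. The basis tuples that can appear have the form $(a_{k_1,1}, \hdots, a_{k_n,n})$ with distinct $k_j \in [2n{-}1]$, and a direct bookkeeping shows that the coefficient of such a tuple in $M_S$ equals $(n-1)!$ if $T \subset \{k_1, \hdots, k_n\}$ and $0$ otherwise. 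The factor $(n-1)!$ arises because, once the position $p$ of the unique element of $S$ among the $k_j$ is fixed, both the outer permutation $\sigma$ and the single inner value $\tau(p)$ arising from $\opleft a_{j_1}, \hdots, a_{j_n} \opright = \sum_\tau (a_{j_{\tau(1)},1}, \hdots, a_{j_{\tau(n)},n})$ are forced, while $\tau$ remains free on the remaining $n-1$ positions. The key observation is that this coefficient depends only on the \emph{set} $K := \{k_1, \hdots, k_n\}$, not on its ordering.

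Consequently, the rank of $E$ equals the rank of the square matrix $\tilde A$ of size $\binom{2n-1}{n}$ indexed by $n$-subsets of $[2n{-}1]$, with $\tilde A[K, S] = 1$ when $T \subseteq K$ (equivalently $|K \cap S| = 1$, or $K \cup S = [2n{-}1]$) and $0$ otherwise. Via the complementation bijection $S \longleftrightarrow T$, $\tilde A$ is carried to the transpose of the containment incidence matrix of $(n{-}1)$-subsets in $n$-subsets of $[2n{-}1]$. By a classical theorem of Gottlieb on inclusion matrices, this incidence matrix has full rank $\binom{2n-1}{n-1}$ because $(i, j) = (n{-}1, n)$ satisfies the hypothesis $i + j \le N$ with $N = 2n-1$; we are precisely at the boundary $i + j = N$, and since $\binom{2n-1}{n-1} = \binom{2n-1}{n}$ the matrix is square and thus invertible. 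Therefore $\tilde A$ is invertible, $E$ has trivial right nullspace, and no nontrivial identity exists in degree $2n-1$.

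The main obstacle is the rank result for $\tilde A$, especially the fact that we land exactly at the boundary case $i + j = N$ of the inclusion-matrix theorem. If a self-contained argument were preferred, the natural route would be to view $\tilde A$ as an $S_{2n-1}$-equivariant operator on the permutation module $\mathbb{Q}^{\binom{[2n-1]}{n}}$ and, using its decomposition $\bigoplus_{k=0}^{n-1} S^{(2n-1-k,\, k)}$ into Specht modules from the Johnson-scheme structure, verify by computing Eberlein-polynomial eigenvalues that none of them vanishes; as a sanity check, the eigenvalue on the trivial component is $n$, because each $K$ meets exactly $n$ subsets $S$ with $|K \cap S| = 1$. Everything else, including the bookkeeping for the $(n-1)!$ coefficient, is routine.
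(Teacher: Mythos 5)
This statement is labelled a \emph{conjecture} in the paper: the author proves only the case $n=3$ (Lemma \ref{degree5}, by exhibiting a full-rank $10\times 10$ submatrix of the $60\times 10$ expansion matrix found by computer) and, in the concluding Remark, attributes a proof of the general case to Sverchkov's preprint. So there is no proof in the paper to compare against; your proposal, if correct, would actually settle the conjecture. Having checked it, I believe it is correct and is a genuine (and much more general) argument. The reduction of the monomials in degree $2n-1$ to the single association type indexed by the inner $n$-subset $S$ is exactly the paper's setup for $n=3$; your computation that the coefficient of $(a_{k_1,1},\hdots,a_{k_n,n})$ in the expansion of $M_S$ is $(n-1)!\,[\,T\subseteq K\,]$ checks out (for $n=3$ it reproduces the paper's expansion of $\opleft\opleft a,b,c\opright,d,e\opright$ with all coefficients equal to $2$), and the crucial point that this depends only on the set $K$ correctly collapses the $n!\binom{2n-1}{n}\times\binom{2n-1}{n}$ matrix $E$ to the square matrix $\tilde A[K,S]=[\,|K\cap S|=1\,]$ of the same rank, since equal rows do not change the row space. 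Identifying $\tilde A$ (after complementing the column index) with the inclusion matrix $W_{n-1,n}$ of $(n-1)$-subsets in $n$-subsets of a $(2n-1)$-set is right, and Gottlieb's theorem does cover the boundary case $i+j=N$, giving rank $\binom{2n-1}{n-1}=\binom{2n-1}{n}$, hence invertibility of the square matrix $\tilde A$ and triviality of the nullspace of $E$. What your approach buys over the paper's is everything: the paper's method is a finite computation that certifies one value of $n$ at a time, whereas yours replaces the computation by a structural identification of the expansion matrix with a classical combinatorial incidence matrix, proving the statement uniformly in $n$; the only external input is Gottlieb's theorem (or, as you note, the eigenvalue computation in the Johnson scheme if one wants a self-contained argument). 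The one step I would ask you to write out fully is the $(n-1)!$ bookkeeping, since the entire reduction rests on the observation that the coefficient depends only on $K$ and not on the ordering $(k_1,\hdots,k_n)$; as sketched, the argument for this is sound.
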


\begin{conjecture} \label{conjecture2}
Every polynomial identity satisfied by $n$-ary intermolecular recombination in
degree $3n{-}2$ (that is, in which each term involves three applications of the
$n$-ary operation) is a consequence of complete symmetry in degree $n$ together
with an identity $R$ in degree $3n{-}2$ in which every coefficient equals $\pm
1$ and which implies that every monomial in the second association type,
  \[
  \opleft \,
  \opleft \, a_1, \hdots, a_n \, \opright,
  \opleft \, a_{n+1}, \hdots, a_{2n} \, \opright, \,
  a_{2n+1}, \hdots, a_{3n-2}
  \, \opright,
  \]
equals a linear combination of monomials in the first association type,
  \[
  \opleft \,
  \opleft \,
  \opleft \, a_1, \hdots, a_n \, \opright, \, a_{n+1}, \hdots, a_{2n-1} \, \opright, \,
  a_{2n}, \hdots, a_{3n-2} \,
  \opright.
  \]
\end{conjecture}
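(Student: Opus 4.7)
The plan is to prove the conjecture by reducing it to a structural statement about the expansion matrix $E_n$ in degree $3n-2$, and then constructing an explicit identity $R = R_n$ with $\pm 1$ coefficients. I proceed in three stages.

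First, I would set up the general framework. For each $n$, let $M_n^{(1)}$ and $M_n^{(2)}$ denote the spaces spanned by multilinear monomials in the first and second association types respectively, after quotienting by complete symmetry in degree $n$. The expansion matrix $E_n$ is a $d_n \times (\dim M_n^{(1)} + \dim M_n^{(2)})$ matrix over $\mathbb{Z}$, where $d_n = n! \binom{3n-2}{n}$ is the number of permutations of $n$-element subsets of the $(3n-2)$-element variable set. By the $S_{3n-2}$-equivariance of $E_n$, I would decompose both source and target into isotypic components and compute the rank in each component separately, using characters or an explicit description of the image. This should produce a formula (or at least a recursion) for $\mathrm{nullity}(E_n)$ in terms of $n$.

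Second, and this is the crux, I would construct $R_n$ explicitly. Comparing the known cases \eqref{reducedidentity} for $n=2$ and the identity $R$ for $n=3$, each has exactly one monomial in the second association type and a collection of first-type monomials whose expansions cancel the remainder. My candidate approach is to begin from the expansion of the generic second-type monomial $\opleft \opleft x_1,\ldots,x_n \opright, \opleft x_{n+1},\ldots,x_{2n} \opright, x_{2n+1},\ldots,x_{3n-2} \opright$, which is a sum over $S_n \times S_n$ of $n$-tuples with coefficient $1$, and then to find a signed combination of first-type expansions whose total reproduces it. The explicit rewrite in Corollary \ref{rewritesecondtype} suggests that the relevant first-type monomials can be indexed by a combinatorial structure such as a permutation paired with a choice of subset, and I would search for an ansatz of this shape.

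Third, assuming $R_n$ has been constructed, I would show that it, together with complete symmetry, generates the full $S_{3n-2}$-module of identities. Since $R_n$ contains a unique second-type monomial, it immediately yields a rewrite rule expressing any second-type monomial as a combination of first-type monomials. After this reduction, one must show that no further identities remain among first-type monomials alone; equivalently, that the submatrix of $E_n$ indexed by the first-type columns has rank $\dim M_n^{(1)}$, which should follow from the dimension count of stage one together with an injectivity argument for the expansion map restricted to the first association type. The main obstacle is undoubtedly stage two: producing $R_n$ in closed form for all $n$. The two known examples are too few to extrapolate a definitive pattern, and any construction must yield an integer cocycle with all coefficients in $\{-1,+1\}$, a very strong constraint. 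My realistic near-term plan is therefore to verify the conjecture computationally for $n = 4$ (and $n = 5$ if tractable) using the Hermite normal form with lattice basis reduction as in Theorem \ref{degree7}, and to extract from the resulting identities a combinatorial description amenable to generalization.
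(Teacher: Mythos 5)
The statement you are proving is Conjecture~\ref{conjecture2}: the paper itself offers no proof of it, presents it explicitly as an open problem, and in the closing Remark attributes a proof (for all $n$) to Sverchkov's preprint \cite{Sverchkov2}. Your proposal is likewise not a proof. You candidly defer the decisive step --- the explicit construction of an identity $R_n$ with all coefficients $\pm 1$ and a single second-type monomial --- to future work, and your fallback is computational verification for $n=4$ and $n=5$, which at best extends the evidence for the conjecture and cannot establish it. Stage one (an isotypic-component rank formula for $E_n$) is a plausible but unexecuted plan, and nothing in the proposal guarantees that a $\pm 1$ cocycle of the required shape exists for every $n$.

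There is also a concrete error in stage three. You reduce the problem to showing that ``the submatrix of $E_n$ indexed by the first-type columns has rank $\dim M_n^{(1)}$,'' i.e.\ that there are no identities among first-type monomials alone. This is already false for $n = 3$: the expansion matrix in degree $7$ has rank $35$ while there are $210$ first-type monomials, so the first-type columns are massively dependent; indeed the identities $P$ and $Q$ of Theorem~\ref{degree7} involve only the first association type. The actual content of the conjecture is not that such residual identities are absent, but that they are all consequences (as an $S_{3n-2}$-module) of the single identity $R_n$ --- in degree $7$ this is exactly the nontrivial fact that $R$ generates the full $245$-dimensional nullspace while $P$ and $Q$ together generate only a $155$-dimensional submodule. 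An ``injectivity argument for the expansion map restricted to the first association type'' cannot work, and replacing it with the correct statement returns you to the hard module-generation problem that the conjecture poses in the first place.
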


\begin{conjecture} \label{conjecture3}
Every polynomial identity satisfied by $n$-ary intermolecular recombination,
with no restriction on the degree, is a consequence of complete symmetry in
degree $n$ together with the identity $R$ from Conjecture \ref{conjecture2} in
degree $3n{-}2$.
\end{conjecture}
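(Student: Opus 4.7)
The plan is to extend Sverchkov's strategy from the binary case to general $n$, with the identity $R$ from Conjecture \ref{conjecture2} playing the role of the binary recombination identity. Accordingly, the argument must be built on top of Conjecture \ref{conjecture2}: I would take as a prerequisite that $R$ has been produced explicitly in degree $3n-2$, so that, just as for $n=3$ in Corollary \ref{rewritesecondtype}, it expresses every monomial in the second association type as a linear combination of monomials in the first (left-comb) association type. The proof of Conjecture \ref{conjecture3} then splits into two parts: a \emph{reduction} step, showing that every identity modulo complete symmetry and $R$ can be rewritten in left-comb form, and an \emph{injectivity} step, showing that left-comb monomials map to linearly independent elements under the expansion map.

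For the reduction step, I would lift $R$ to consequences in each higher degree by inserting an $n$-tuple $\opleft x, h_1, h_2, \hdots, h_{n-1} \opright$ in place of one of the variables of $R$, or by embedding the whole identity $R$ inside a larger bracket --- this is the same construction used for $n=3$ in the degree 9 argument. The goal is to show that repeatedly applying these consequences, in all positions and for all permutations of variables, drives any association type toward the left-comb type under a suitable termination order (for instance a lexicographic order on sequences of subtree positions counting the number of non-left-comb internal nodes). This amounts to establishing that the operadic ideal generated by $R$ and complete symmetry admits a Gr\"obner basis whose leading terms are precisely the non-left-comb association types.

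For the injectivity step, I would parametrize left-comb monomials (modulo complete symmetry at the innermost bracket) by pairs consisting of an unordered $n$-subset of variables at the innermost position together with an ordered list of unordered $(n{-}1)$-subsets attached at each subsequent level; the total count should match the dimension of the image of the expansion map in each degree. A clean way to prove injectivity is to identify, for each left-comb normal form, a distinguishing $n$-tuple $(x_{1}, x_{2}, \hdots, x_{n})$ appearing in its expansion with a multiplicity that determines the normal form uniquely. A rank-by-rank dimension count, verified inductively in the depth $d$ of the left-comb (degree $d(n{-}1)+1$), would then finish the proof.

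The hard part will be confluence of the rewriting system in all degrees. For $n=3$ the paper confirms the analogue of Conjecture \ref{conjecture3} only up to degree 9, and already in degree 9 the verification relies on heavy modular computation spanning 15400 monomials; an inductive proof must control both the number of association types, growing like the Fuss--Catalan number $\frac{1}{(n-1)d+1}\binom{nd}{d}$, and the bookkeeping of which consequences of $R$ suffice to close the rewriting. In operadic language, the crux is a finite critical pair check: two applications of $R$ at overlapping positions in a monomial of degree $\le 2(3n-2)-1 = 6n-5$ must produce the same result modulo permutations of $R$. If this check can be carried out uniformly in $n$ --- either by hand via a structural analysis of the shape of $R$, or by a combinatorial model identifying the free algebra modulo complete symmetry and $R$ with a quotient of $R(X,n)$ --- then Bergman's diamond lemma in its operadic form reduces Conjecture \ref{conjecture3} to Conjecture \ref{conjecture2}.
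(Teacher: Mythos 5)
First, a point of calibration: the statement you are proving is stated in the paper as a \emph{conjecture}. The paper offers no proof of it, explicitly calls its resolution ``the most important open problem'' in the subject, and only notes in a closing remark that a proof for all $n$ appears in Sverchkov's preprint \cite{Sverchkov2}. So there is no proof in the paper to compare yours against, and your proposal must stand on its own.

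On its own terms, the proposal has a genuine gap, and it is not only that the confluence/critical-pair verification is left conditional (``if this check can be carried out uniformly in $n$\dots''), which already makes this a programme rather than a proof. The more concrete problem is that your injectivity step rests on a false premise. Left-comb monomials do \emph{not} map to linearly independent elements under the expansion map, and their count does not match the dimension of the image. Already for $n=3$ in degree $7$ there are $210$ left-comb monomials modulo complete symmetry, while the expansion matrix has rank only $35$; moreover the identities $P$ and $Q$ of Theorem \ref{degree7}, which are consequences of $R$, lie entirely within the first association type. Hence the normal forms modulo complete symmetry and $R$ cannot be the left-comb monomials: the ideal generated by $R$ contains a large space of relations purely among left-comb monomials, so a terminating and confluent rewriting system for this quotient must have leading terms inside the first association type as well, not ``precisely the non-left-comb association types'' as your Gr\"obner-basis ansatz asserts. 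Consequently your rank-by-rank dimension count cannot close. The rewriting of the second (and deeper) association types into the first --- the $n$-ary analogue of Corollary \ref{rewritesecondtype} --- is the easy half of the problem; the substance of Conjecture \ref{conjecture3} is controlling the relations that $R$ and its substitutional consequences impose \emph{within} the left-comb type in every degree, and that is exactly the part your normal-form description gets wrong.
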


Resolving these conjectures is the most important open problem in the theory of
$n$-ary intermolecular recombination.

\begin{remark}
Shortly before the final version of this paper was sent to the editors, I received
Sverchkov's preprint \cite{Sverchkov2}, which contains a complete proof of all three
Conjectures for all $n$, with an explicit identity $R$ as required by Conjecture
\ref{conjecture2}.
\end{remark}

\section{Acknowledgements}

I thank the organizers (B. H. Li, R. C. Laubenbacher, J. J. P. Tian) of the
Special Session on Biomathematics at the First Joint Meeting of the Shanghai
Mathematical Society and the American Mathematical Society (Shanghai, China,
December 17--21, 2008) for the invitation to give a presentation at the
conference and to contribute a paper to this special issue.

This research was supported by NSERC, the Natural Sciences and Engineering
Research Council of Canada.

\end{document}